\newtheorem{Lemma}{Lemma}
\newtheorem{Proposition}{Proposition}
\newtheorem{remark}{Remark}
\newtheorem{Theorem}{Theorem}
\newtheorem{ATheorem}{Theorem}[section]
\newtheorem{ALemma}{Lemma}[section]
\newtheorem{Definition}{Definition}
\newtheorem*{Definition*}{Definition}
\newtheorem{Corollary}{Corollary}
\newcommand{\deq}{\stackrel{d}{=}}
\newcommand{\mean}{\mathbb E}
\newcommand{\prob}{\mathbb P}
\begin{document}
\pagestyle{empty}
\centerline{\Large\bf Queues and risk processes with dependencies}
\bigskip
\centerline{E.S.\ Badila\footnotemark[1], O.J.\ Boxma \and J.A.C.\ Resing}
\address{Department of Mathematics and Computer Science, Eindhoven University of Technology, P.O. Box 513, 5600 MB Eindhoven, The Netherlands.}
\footnotetext[1]{ Supported by Project 613.001.017 of the Netherlands Organisation for Scientific Research (NWO)\\ \indent\emph{Email addresses: e.s.badila@tue.nl, o.j.boxma@tue.nl, resing@win.tue.nl.}}
\vspace{2cm}
\setlength{\parindent}{0pt}
\textbf{Abstract:}
We study the generalization of the $G/G/1$ queue obtained by relaxing the assumption of independence between inter-arrival times and service requirements. The analysis is carried out for the class of multivariate matrix exponential distributions introduced in \cite{Bladt:MVME}.
In this setting, we obtain the steady state waiting time distribution and we show that the classical relation between the steady state waiting time and the workload distributions remains valid when the independence assumption is relaxed. We also prove duality results with the ruin functions in an ordinary and a delayed ruin process. These extend several known dualities between queueing and risk models in the independent case.
 Finally we show that there exist stochastic order relations between the waiting times under various instances of correlation.

\vspace{1cm}
\emph{Keywords}: G/G/1 queue, dependence, waiting time, workload, stochastic ordering, duality, ruin probability, insurance risk, Value at Risk.

\vspace{0.2cm}
2000  \emph{Mathematics Subject Classification.} Primary 60K25, 91B30.

\begin{section}{Introduction}
In this paper we study a single server queue with the special feature
that the service requirement of each arriving customer is correlated with the subsequent inter-arrival time.
Dependence between service and inter-arrival times arises naturally in a number of applications.
If one has some control over the arrival process to the server, then one might, e.g.,
wait a relatively long (short) time with dispatching a new job to the server, if the previous job was relatively big (small).
In fact, we shall see in Section \ref{Section numerics} that a positive correlation
between the service requirement and the subsequent inter-arrival time
reduces the waiting times, whereas negative correlation increases waiting times. The increase/decrease is in the sense of \emph{convex ordering} (cf. \cite{Stoyan&Daley}, Ch. 1).

In studying the single server queue $G/G/1$, it is usually assumed that all inter-arrival times and service requirements are independent.
An important exception is the class of queues with Batch Markovian Arrival Process, $BMAP/G/1$, see for example Lucantoni
\cite{Lucantoni} and references therein.
The $BMAP/G/1$ queue provides a framework to model dependence between successive interarrival times. In \cite{CombeBoxma} it is also used to study an $M/G/1$ queue
in which service requirements depend on the previous inter-arrival times; see \cite{BBC} for a different approach
to the latter form of dependence, which does not use the MAP machinery.
An important paper regarding dependence between inter-arrival and service requirements is the one by Adan and Kulkarni \cite{AK}.
They consider a single server queue with Markov-dependent inter-arrival and service requirements:
a service requirement and subsequent inter-arrival time have a bivariate distribution that depends on an underlying Markov chain
which jumps at customer arrival epochs. The inter-arrival times in \cite{AK} are exponentially distributed, with rate $\lambda_j$ when the Markov chain jumps to state $j$.

It should be observed that the analysis of a $G/G/1$ queue with some dependence structure between a service requirement $B_i$ and the {\em subsequent} inter-arrival time $A_i$ is intrinsically easier than that of a
$G/G/1$ queue with some dependence structure between $A_i$ and the {\em next} $B_{i+1}$.
The reason is that $B_i$ and $A_i$ only appear as a difference $B_i-A_i$ in the Lindley recursion $W_{i+1} = {\rm max}(W_i+B_i-A_i,0)$
for the waiting time $W_i$ of the $i^{th}$ arriving customer.
In a sense, the study of the waiting time distribution in the $G/G/1$ queue reduces to the study of a random walk
with steps $B_i-A_i$.
Still, there are not many examples known of joint distributions of $(B_i,A_i)$ that
allow a detailed exact analysis. One of the exceptions is provided in \cite{BP}, where a threshold-type dependence
between $B_i$ and $A_i$ is
shown to be analytically tractable.

In the present study, we shall consider a very general class
of bivariate distributions of $(B_i,A_i)$, which allows us to obtain
detailed, explicit, results for the steady-state waiting time and workload distribution.
The dependence structure under consideration is modelled by a class of bivariate matrix-exponential distributions (Bladt and Nielsen~\cite{Bladt:MVME}) in which the joint Laplace-Stieltjes transform of the claim size and the inter-claim time is a rational function. 

While this paper was under preparation, Hansjoerg Albrecher kindly pointed out to us that
Constantinescu et al. \cite{Cstescu}  were obtaining results similar to ours for a generalization of
the Sparre Andersen insurance risk model.
The classical Sparre Andersen model considers the development of the capital
of an insurance company that earns premium at a fixed rate and that receives claims with a stochastic size at stochastic inter-arrival times --
all the input variables being {\em independent}.
In contrast, Constantinescu et al. \cite{Cstescu} allow a claim size to depend on the previous inter-claim time,
in a similar way as an inter-arrival time depends on the previous service requirement in our queueing model.
One can establish a duality relation between the insurance risk model of \cite{Cstescu} and our model
(cf.\ Section~\ref{S4}), and this duality relation in particular
implies that the probability of ruin of the insurance company, with initial capital $u$, equals the probability
that the steady-state waiting time in the corresponding queueing model exceeds $u$.
Our approach is based on Wiener-Hopf factorization; Constantinescu et al. \cite{Cstescu} use a completely different approach, based on operator theory methods.
We shall explore the relation between the queueing and insurance risk models with dependence in more detail,
which will allow us to also obtain the so-called delayed ruin probability
in the model of \cite{Cstescu}, viz., the ruin probability when time $0$ is not a claim arrival epoch but an arbitrary epoch,
the claim arrival process being in stationarity.

Already having discussed the queueing literature with dependence between inter-arrival and service requirement,
let us now turn to the insurance risk literature with dependence between inter-claim time and claim size.
In recent years, this has been a hot topic in risk theory.
Albrecher and Boxma~\cite{Albrecher&Boxma2004} derive exact formulas for the ruin probability in a Cram\'er-Lundberg model with a threshold-type dependence between a claim size and the next inter-claim time.
In \cite{Albrecher&Boxma2005} a much more general semi-Markovian risk model is being considered,
which bears some resemblance to the queueing model in \cite{AK}.
Kwan and Yang \cite{KY} consider a specific threshold-type dependence of claim size on previous inter-claim time;
in \cite{ABI} this is put in the larger framework of Markov Additive Processes.
Another specific dependence structure between claim size and previous inter-claim time
is treated in Boudreault et al.\ \cite{Boudreault}.
Asymptotic results were obtained in Albrecher and Kantor~\cite{Albrecher&Kantor2002}, where the relation between the dependence structure and the Lundberg exponent is studied. Also Albrecher and Teugels~\cite{Albrecher&Teugels2006} give asymptotic results for the finite and infinite horizon ruin probabilities when the current claim size and the previous inter-claim time are dependent according to an arbitrary copula structure.


The {\em main contributions} of the paper are the following.
(i) We provide an exact analysis of the waiting time distribution in a $G/G/1$ queue
with correlation between a service requirement $B$ and the subsequent interarrival time $A$,
$B$ and $A$ having a multivariate matrix-exponential distribution.
(ii) We prove that the simple relation which holds between steady-state workload and waiting time distributions in the ordinary
$G/G/1$ queue remains valid in the case of correlated $B$ and $A$.
(iii) We consider the dual Sparre Andersen insurance risk model with correlation between inter-claim time and subsequent claim size, and in particular we show that the Tak\'acs relation (cf.~ \cite{Franken}, Corollary 4.5.4) between the ordinary ruin probability
and the delayed ruin probability remains valid. (iv) Finally, we show that, in comparison with the classical set-up without dependence, positive and negative correlation respectively decreases and increases the waiting times in the sense of convex ordering. We also illustrate with numerical results the influence of dependence on the expected values of the waiting times but also on the $95\%$-percentiles of the ruin functions (VaR's).

The paper is organized as follows.
Section~\ref{S2} contains a detailed model description, which in particular includes a description of the class of bivariate distributions under consideration. It also presents the waiting time analysis.
The relation between the steady-state waiting time and workload distributions is exposed in Section~\ref{S3}.
Section~\ref{S4} is devoted to the dual insurance risk model.
In Section~\ref{Section numerics} we consider several examples of bivariate distributions of $B_i$ and $A_i$.
For these examples, we present numerical results on the mean and tail of the waiting time distribution
(and, by duality, on the ruin probability), which exhibit the effect of (positive or negative) correlation
on waiting time and ruin probability, together with stochastic ordering results and by consequence, ordering between the waiting times.
\end{section}

\begin{section}{Model Description and Analysis of the waiting time \label{S2}}
We study a generalization of the classical $G/G/1$ model, where we allow for an arbitrary correlation between the service requirement of the $n^{th}$ customer and the inter-arrival time between the $n^{th}$ and $(n+1)^{th}$ customer.
As a key performance measure in this model, we first consider the waiting time process in an initially empty system. In Section \ref{S3}, we prove that the steady-state waiting time is related to the steady-state workload in a similar way as in the independent case.

Let $B_i$ be the service requirement of the $i^{th}$ customer, $A_i$ the inter-arrival time between the $i^{th}$ and the $(i+1)^{th}$ customer, and $c$ the server's speed.
We assume that $(A_{i},B_i)$ are i.i.d. sequences of random vectors. This implies that the arrival process of customers is renewal and that the quantities $(B_i-cA_{i})$ are i.i.d. However, within a pair, $A_i$ and $B_i$ are dependent, hence the $i^{th}$ service requirement and the subsequent inter-arrival time are correlated. We denote by $(B,A)$ a generic pair made up of a service requirement and the subsequent inter-arrival time.
In Figure~\ref{fig1} we display the workload process $\{V_t, t \geq 0\}$
and the waiting time process $\{W_n, n=1,2,\dots\}$; here $V_t$ denotes work in the system at time $t$, and $W_n$ denotes the waiting time of the
$n^{th}$ arriving customer.
The waiting time process satisfies the Lindley recursion:
\begin{equation*} W_{n+1}= \max(W_{n}+c^{-1}B_n-A_n,0).\end{equation*}

\vspace{0.2cm}
Under the stability condition $\mathbb E(c^{-1}B-A)<0$, $W_n$ converges in distribution to a proper random variable $W$ and we can write:
\begin{equation}W\deq\max\left(W+c^{-1}B-A,0\right). \label{stationary Lindley}\end{equation}

\begin{center}
\begin{figure}[!ht]
\includegraphics[width=1\linewidth,clip=true,trim=2cm 18cm 0cm 4cm]{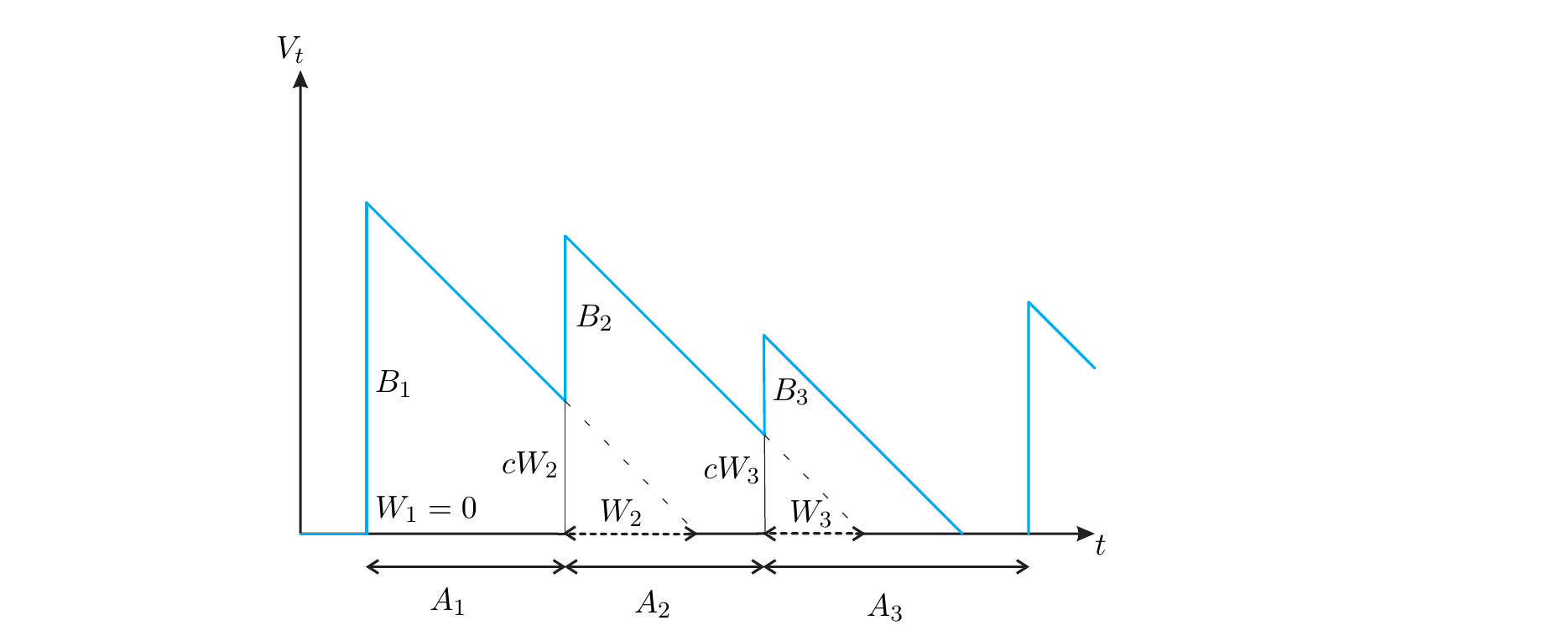}
\caption{Workload process and waiting time process}
\label{fig1}
\end{figure}
\end{center}

\paragraph{\textbf{The dependence structure:}}

We model the dependence structure using the class of multivariate matrix-exponential distributions (MVME), which was introduced by Bladt and Nielsen~\cite{Bladt:MVME}. This class contains other known classes of distributions with interesting probabilistic interpretations, like the multivariate phase-type distributions studied in  Assaf et al.~\cite{Assaf:MPH} and further in Kulkarni~\cite{Kulkarni:MPH*}.
We will further discuss this class in Section \ref{Section numerics} where we also give examples which admit a probabilistic interpretation.
Below we cite Definition 4.1 of Bladt and Nielsen~\cite{Bladt:MVME}:

\begin{Definition}A non-negative random vector $(A,B)$ is said to have a  bivariate matrix-exponential distribution if the joint Laplace-Stieltjes transform (LST) $\mathbb Ee^{-s_1A-s_2B} $ is a rational function in $(s_1,s_2)$, i.e. it can be written as $\frac{F(s_1,s_2)}{G(s_1,s_2)}$, where $F(s_1,s_2)$ and $G(s_1,s_2)$ are polynomial functions in $s_1$ and $s_2$.
\end{Definition}

\noindent As a consequence of this defining property, the transform of the difference $Y:=c^{-1}B-A$ is also a rational function. 
For simplicity, let us denote $\mathbb E {\rm e}^{-sY} :=\frac{f(s)}{g(s)}$.
 We rewrite identity (\ref{stationary Lindley}) in terms of Laplace-Stieltjes transforms. After some straightforward computations, one obtains:

\begin{equation} \mathbb E e^{-sW}\left[1-\mean e^{-sY}\right]= \mathbb P\left( W+Y\leq 0\right)  -  \mathbb Ee^{-s(W+Y)}1_{\left\{W+Y\leq 0\right\}}.\label{analyt} \end{equation}

\noindent Using the rationality of the transform of $Y$, we can rewrite (\ref{analyt}):
\[\mathbb E e^{-sW}\frac{g(s)-f(s)}{g(s)}= R_-(s),\]
where $R_-(s)$ is the function on the right-hand side of (\ref{analyt}), which is analytic in $\mathcal Re\,s<0$ and continuous in $\mathcal Re\,s\leq 0$. Also, since $W\geq 0 $ by definition, $\mathbb E e^{-sW}$ is analytic in $\mathcal Re\, s> 0$ and continuous in $\mathcal Re\, s \geq 0$.
\\
Using Wiener-Hopf factorization, we now obtain the LST of the steady-state waiting time distribution:

\begin{Theorem}\label{theorem delay}For $(A,B)$ having a bivariate matrix exponential distribution, the LST of the steady state waiting time is given by
 \begin{equation}\label{delay transform}E e^{-sW}= \frac{\prod_{\tilde{s}^-_j}(1-\frac{s}{\tilde{s}^-_j})}{\prod_{s^-_k}(1-\frac{s}{s^-_k})},\end{equation} where $s^-_k$ are the zeros of $1-\mean e^{-sY}$ in $\mathcal Re\, s<0$ and $\tilde{s}^-_j$ are its poles in $\mathcal Re\;s<0$.
\end{Theorem}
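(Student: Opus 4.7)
The proof is a Wiener--Hopf factorization argument applied to the identity
\begin{equation*}
\mean e^{-sW}\cdot\frac{g(s)-f(s)}{g(s)} = R_-(s)
\end{equation*}
derived just above the statement. Here $R_-$ is bounded and analytic in $\mathcal Re\, s<0$, while $\mean e^{-sW}$ is bounded (by $1$) and analytic in $\mathcal Re\, s>0$. Using the rationality of $1-\mean e^{-sY}$, I would factor $g-f$ according to its zeros: the simple zero at $s=0$ (forced by the stability condition $\mean Y<0$), the zeros $\{s_k^-\}$ (of cardinality $n^-$) in the open left half-plane, and the zeros $\{s_k^+\}$ (of cardinality $n^+$) in the open right half-plane; similarly $g$ factors according to its zeros $\{\tilde s_j^-\}$ (cardinality $m^-$) and $\{\tilde s_j^+\}$ (cardinality $m^+$). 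Since $\mean e^{-sY}\to 0$ at infinity, $\deg(g-f)=\deg g$, and hence $n^-+n^++1=m^-+m^+$.

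The plan is to show that
\begin{equation*}
\Phi(s) := \mean e^{-sW}\cdot\frac{\prod_{s_k^-}\left(1-\tfrac{s}{s_k^-}\right)}{\prod_{\tilde s_j^-}\left(1-\tfrac{s}{\tilde s_j^-}\right)}
\end{equation*}
extends to an entire function equal to $1$. Analyticity in $\mathcal Re\, s>0$ is immediate since the rational factor has singularities only in the left half-plane. In $\mathcal Re\, s<0$, $\mean e^{-sW}$ continues meromorphically as $R_-(s)g(s)/(g(s)-f(s))$: its only poles are the $s_k^-$, which are cancelled by the numerator of the rational factor, and its zeros at the $\tilde s_j^-$ (coming from the poles of $1-\mean e^{-sY}$) cancel the poles introduced by the denominator. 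The simultaneous simple zeros of $R_-$ and $g-f$ at $s=0$ yield $\Phi(0)=1$, so $\Phi$ extends to an entire function.

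The main step is the degree identity $n^-=m^-$. I would obtain this by Rouch\'e's theorem on a contour consisting of a large semicircle in the right half-plane together with the imaginary axis, indented slightly into the left half-plane around $s=0$: on this contour $|\mean e^{-sY}|<1$ (by Riemann--Lebesgue on the imaginary axis away from $0$, by decay of the rational function on the large semicircle, and by $\mean e^{-sY}\approx 1+s\,|\mean Y|$ on the small LHP indentation). Hence $g-f$ has as many zeros inside the contour as $g$, namely $m^+$, whence $n^++1=m^+$, and the degree relation gives $n^-=m^-$. With this balance the rational factor in $\Phi$ is bounded at infinity, so $\Phi$ is bounded on $\mathcal Re\, s\geq 0$; in the left half-plane, substituting the rational form of $1-\mean e^{-sY}$ and cancelling common factors yields, up to a nonzero multiplicative constant $\kappa$,
\begin{equation*}
\Phi(s)=\kappa\,R_-(s)\cdot\frac{\prod_{\tilde s_j^+}\left(1-\tfrac{s}{\tilde s_j^+}\right)}{s\,\prod_{s_k^+}\left(1-\tfrac{s}{s_k^+}\right)},
\end{equation*}
which is bounded by the same degree count together with $R_-$ being bounded on the closed LHP and $R_-(s)/s$ being bounded near the origin (since $R_-(0)=0$). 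Liouville then gives $\Phi\equiv 1$, from which the claimed formula follows. The main obstacle I expect is the Rouch\'e-based degree count and the careful tracking of zero/pole cancellations in the meromorphic continuation.
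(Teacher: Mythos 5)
Your argument is correct in substance and is essentially the paper's own route: the Wiener--Hopf identity, analytic continuation across the imaginary axis, a Rouch\'e count of zeros of $g-f$ against those of $g$, and Liouville's theorem. The packaging differs slightly: the paper clears the right half-plane zeros of $g$ by multiplying through by $g_+(s)$ (see (\ref{ident2})), applies Liouville with the growth bound $O(|s|^{m_+})$ to conclude the continued function is a polynomial of degree at most $m_+$, and then identifies that polynomial as $C(g-f)_+(s)$; you instead divide by the candidate product and show the resulting function $\Phi$ is entire and bounded, hence the constant $1$. Your version needs the balance $n^-=m^-$ up front (the paper only uses it afterwards, in the remark on $\prob(cW=0)$), and you obtain it correctly from the Rouch\'e count plus $\deg(g-f)=\deg g$. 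Two of your justifications are too quick, though both are repairable and neither changes the architecture. First, on the small indentation into $\mathcal Re\,s<0$ around the origin, the first-order expansion $1+s|\mean Y|$ does not give $|\mean e^{-sY}|<1$ near the endpoints $\pm i\delta$ of the indentation: there $\mathcal Re\,s\approx 0$ and $|1+s|\mean Y||>1$, so you must invoke the second-order (variance) term of the expansion, or simply use the contour of Lemma \ref{A3}, whose vertical segment at $\mathcal Re\,s=-\epsilon$ yields $|\mean e^{-sY}|\le \mean e^{\epsilon Y}<1$ directly from the stability condition. Second, the claim that $\mean e^{-sY}\to 0$ at infinity presumes $Y$ has no atom at $0$; within the MVME class one may have $\prob(c^{-1}B=A)>0$, in which case the limit along the imaginary axis (and of the rational function, uniformly in direction) is $\prob(Y=0)$, which is $<1$ under the paper's assumption $\prob(c^{-1}B=A)<1$ --- still enough for $\deg(g-f)=\deg g$ and for the bound on the large semicircle, but your statement should be adjusted accordingly. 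Finally, like the paper, you tacitly take $f/g$ in lowest terms, so that the $\tilde s_j^-$ are exactly the left half-plane zeros of $g$ and the pole/zero cancellations in your meromorphic continuation go through; this shared assumption is harmless but worth stating.
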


\begin{proof} Let $m_+$ be the number of zeros of $g(s)$ in $\mathcal Re\; s \geq 0$. We move these to the right-hand side of the identity above:
\begin{equation}
\mathbb Ee^{-sW}\frac{g(s)-f(s)}{g_-(s)}=g_+(s)R_{-}(s),
\label{ident2}
\end{equation}
where $g_+(s)=\prod_{k=1}^{m_+}(s-\tilde s^+_k)$, the product being over the zeros of $g$ with $\mathcal Re\;\tilde s^+_k\geq 0$; and $g_-(s)=g(s)/g_+(s)$. Now the left-hand side of (\ref{ident2}) is analytic in $\mathcal Re\,s\geq 0$, the right-hand side remains analytic in $\mathcal Re\,s < 0$; therefore by analytic continuation, the left-hand side is an entire function.

We use a version of Liouville's theorem \ref{A2} (see Appendix), which states that an entire function with asymptotic behavior $O(|s|^{m_+})$ must be a polynomial of degree at most $m_+$.

Liouville's theorem  implies that the left-hand side of (\ref{ident2}) is a polynomial $P(s)$ of degree $deg(P)\leq deg(g_+)=m_+$.
 Therefore we can write \begin{equation}\mathbb Ee^{-sW}=\frac{g_-(s)}{g(s)-f(s)}P(s).\label{af}\end{equation} Since $g_-(s)$ has zeros only in $\mathcal Re\;s<0 $, $P(s)$ must have all the zeros of $g-f$ from $\mathcal Re\; s\geq 0$ because otherwise $\mathbb Ee^{-sW}$ would have a pole in $\mathcal Re\, s\geq 0$ which is not possible. 

\vspace{0.2cm}

Now all boils down to showing that $g(s)-f(s)$ and $g(s)$ have the same number of zeros (i.e. $m_+$) in $\mathcal Re\;s\geq 0$. Rouch\'e's theorem \ref{A1} in the Appendix seems to be the right tool for this, and in Lemma \ref{A3} in the Appendix we show that indeed $|g(s)|>|f(s)|$ in $\mathcal Re\;s\geq 0$.

Since $P(s)$ must have these $m_+$ zeros of $g(s)-f(s)$ as its own, and at the same time $deg(P)\leq m_+$ from above, this determines $P(s)$ up to a constant: $P(s) = C(g-f)_+(s)$, where  $(g-f)_+(s):=\prod_{s^+_k}(s-s^+_k)$, $s^+_k$ being the zeros of $(g-f)(s)$ with $\mathcal Re\; s\geq 0$ (this also includes the zero at $s_0=0$).  After replacing $P(s)$ and reducing the factors in Formula (\ref{af}), we obtain the following formula for $\mathbb E e^{-sW}$:
\begin{equation}\label{final W} \mathbb Ee^{-sW}=C\frac{\prod_{\tilde s^-_j} (s- \tilde s^-_j) }{\prod_{s^-_k} (s- s^-_k) }.  \end{equation}
Setting $s=0$ determines the constant: $C= \prod_{s^-_k} (- s^-_k) /\prod_{\tilde s^-_j} (- \tilde s^-_j) $, hence
(\ref{delay transform}) follows.
%
\end{proof}

\begin{remark} The PASTA property does not hold,
and hence the distribution of the steady-state workload differs in principle from $cW$, the steady-state workload as seen by an arriving customer.
In particular,
we have $\prob(V=0)\neq \prob(cW=0).$ Actually, we find the atom at zero of $cW$ if we take $s\rightarrow \infty$ in (\ref{delay transform}), with the additional remark that the numerator has the same number of factors as the denominator, which follows from Rouch\'e's theorem:
\begin{equation}\label{W atom zero}\prob(cW=0) = C = \prod_{s_k^{-}} s_k^{-}/ \prod_{\tilde s_j^{-}} \tilde s_j^{-}.\end{equation}
On the other hand, from first principles we have, with $\rho := \frac{\mean B}{c \mean A}$, for the steady-state probability of an empty system:
\[
\prob(V=0) = 1 - \rho .
\]
\end{remark}

The factorization used in the proof of identity (\ref{delay transform}) can be also used to obtain the transform $\mean e^{sI}$ of $I$, the steady state idle period of the system.

\begin{Corollary}The transform of the idle period is given by

\[\mean e^{sI} = 1 - \prod\limits_{s_k^+} (s-s_k^+)/\prod\limits_{\tilde s_j^+}(s-\tilde s_j^+), \,\,\,\mathcal Re\,s\leq 0,\]

with $s_k^+$ being the zeroes of $\frac{g(s)-f(s)}{g(s)}$ in $\mathcal Re\, s\geq 0$ and $\tilde s_j^+$ its poles in $\mathcal Re\, s>0.$
\end{Corollary}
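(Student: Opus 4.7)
The plan is to exploit the same identity (\ref{analyt}) that drove Theorem \ref{theorem delay}, but now to read off information about the \emph{right-hand side} rather than about $\mean e^{-sW}$. The key probabilistic observation is that the random walk $W_n+Y_n$ is negative exactly when the server becomes idle: in that case the length of the ensuing idle period equals $-(W_n+Y_n)$. Taking the stationary limit, one has
\[
\mean e^{-s(W+Y)} 1_{\{W+Y\leq 0\}} \;=\; \prob(W+Y\leq 0)\,\mean e^{sI},
\]
so that (\ref{analyt}) can be rewritten as
\[
\mean e^{-sW}\bigl[\,1-\mean e^{-sY}\,\bigr] \;=\; \prob(W+Y\leq 0)\bigl[\,1-\mean e^{sI}\,\bigr].
\]

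Next I would plug in the two factorizations already obtained in the proof of Theorem \ref{theorem delay}. Writing $g=g_+g_-$ and $g-f=(g-f)_+(g-f)_-$ for the canonical splitting along $\mathcal Re\,s=0$, the formula (\ref{af}) for $\mean e^{-sW}$ combined with $1-\mean e^{-sY}=(g-f)/g$ collapses, after cancellation of $g_-$ and $(g-f)_-$, to
\[
\mean e^{-sW}\bigl[\,1-\mean e^{-sY}\,\bigr] \;=\; C\,\frac{(g-f)_+(s)}{g_+(s)},
\]
with $C$ the constant determined in Theorem \ref{theorem delay}.

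The last ingredient is the identification $\prob(W+Y\leq 0)=C$. By the Lindley recursion, $\{W_{n+1}=0\}=\{W_n+Y_n\leq 0\}$, so in stationarity $\prob(W+Y\leq 0)=\prob(W=0)$, and this atom is precisely the value $C$ already extracted in (\ref{W atom zero}) by letting $s\to\infty$ in (\ref{delay transform}). Equating the two expressions above and solving gives
\[
1-\mean e^{sI} \;=\; \frac{(g-f)_+(s)}{g_+(s)} \;=\; \frac{\prod_{s_k^+}(s-s_k^+)}{\prod_{\tilde s_j^+}(s-\tilde s_j^+)},
\]
which is the claimed formula. The domain $\mathcal Re\,s\leq 0$ is the natural one since $I\geq 0$; note also that the zero of $(g-f)_+$ at $s_0=0$ automatically enforces $\mean e^{0\cdot I}=1$.

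The only slightly delicate step is the probabilistic identification of the right-hand side of (\ref{analyt}) as $\prob(W+Y\leq 0)[1-\mean e^{sI}]$; once that bookkeeping is done, everything else is a direct consequence of the Wiener--Hopf factorization already established, together with the atom computation (\ref{W atom zero}). No new complex-analytic input (Rouch\'e, Liouville) is required beyond what is in the proof of Theorem \ref{theorem delay}.
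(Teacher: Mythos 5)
Your proposal is correct and follows essentially the same route as the paper: condition on $\{W+Y\leq 0\}$ to identify the idle-period transform in the right-hand side of (\ref{analyt}), substitute the Wiener--Hopf factorization $\mean e^{-sW}=C\,g_-(s)/(g-f)_-(s)$ from Theorem \ref{theorem delay}, and cancel using $\prob(W=0)=\prob(W+Y\leq 0)=C$ from the Lindley identity and (\ref{W atom zero}). No further comment is needed.
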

\begin{proof}
Conditional on $W+Y\leq 0$, $I=-(W+Y)$, so we may write 

\[ \mean e^{sI} = \frac{1}{\prob(W+Y\leq 0)}\mean e^{s(-W-Y)}1_{\{W+Y\leq 0\}}.  \]
The transform $\mean e^{s(-W-Y)}1_{\{W+Y\leq 0\}}$ already appears on the right-hand side of (\ref{analyt}), hence the transform of the idle period can be rewritten as

\begin{equation}\label{idle period}\mean e^{sI} =1- \frac{1}{\prob(W+Y\leq 0)}\cdot  \mean e^{-sW}\cdot\frac{g(s)-f(s)}{g(s)}, \end{equation}

As in the proof of Theorem \ref{theorem delay}, we make use of the factorizations $g(s)=g_+(s)\cdot g_-(s)$ and $(g-f)(s)=(g-f)_+(s)\cdot(g-f)_-(s) $ which were obtained via Rouch\'e's theorem. Therefore, using (\ref{af}), (\ref{final W}) and (\ref{W atom zero}) we may write

\[ \mean e^{sI} = 1 - \frac{\prob(W=0)}{\prob(W+Y\leq 0)}\cdot \frac{g_-(s) }{(g-f)_-(s)} \cdot\frac{g(s)-f(s)}{g(s)}.  \]
Note that the identity in law (\ref{stationary Lindley}) implies $\prob(W=0) = \prob (W+Y\leq 0)$. After cancelling the factors above, $\mean e^{sI}$ reduces to
\[\mean e^{sI} = 1 - \prod\limits_{s_k^+} (s-s_k^+)/\prod\limits_{\tilde s_j^+}(s-\tilde s_j^+).\]
\end{proof}

\begin{remark}
Alternatively we can use Formula (6.20) in Cohen~\cite{Cohen1976}, p.21, which makes use of the regenerative structure of the workload process w.r.t. the busy cycles of the queue. It can be shown that the formula remains valid even in the dependent case. The connection with (\ref{idle period}) is then $\frac{1}{\prob(W+Y \leq 0)} = \mean N$, the mean number of customers served during a busy cycle.
\end{remark}
In the next section we show that similar arguments involving regeneration as the ones employed in \cite{Cohen1976}, can be extended in our setting to give the relation between the steady-state workload and waiting time distributions.
\end{section}

\begin{section}{The steady-state workload}
\label{S3}
In this section we consider the steady-state workload in the queueing model
with correlation between service requirement $B$ and subsequent inter-arrival time $A$.
We shall prove that the known relation between the steady-state workload and waiting time
for the single server queue with {\em independent} service requirement and inter-arrival time (\cite{Asmussen_queues}, p. 274,
\cite{Cohen1976}, p. 19/20, or
\cite{Cohen SingleServer}, p. 296/297)
remains valid.
For this purpose we adapt the proof in \cite{Cohen1976}, which is based on the fact that the workload process
regenerates at the beginning of each busy cycle. The LST of the workload and waiting time distributions
can then be written as stochastic mean values of the LST over one full busy cycle.
\begin{Theorem}The steady-state workload $V$ and the waiting time $W$ are related in the following way:

\begin{equation}\label{queueing relation formula}\prob(V\leq v)=1-\rho + \rho \,\prob(cW+ B^{res}\leq v), \end{equation} with $\rho =\frac{\mean B}{c\mean A}$ and $B^{res}$ the marginal distribution of a residual service requirement, viz.,
\[
\begin{array}{l}
\prob(B^{res}\leq v)=\frac{1}{\mean B}\int\limits_0^v \prob(B> u)\,{\rm d}u.
\end{array}
\]
\label{queueing relation}
\end{Theorem}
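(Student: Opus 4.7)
The plan is to follow the regenerative approach of Cohen \cite{Cohen1976}, which the authors already point to in the preceding remark. Let $C$ denote a generic busy cycle, consisting of the services of $N$ customers and a terminal idle period $I$, so that
\[\mean e^{-sV}\,=\,\frac{\mean\int_0^C e^{-sV_t}\,{\rm d}t}{\mean C}.\]
On the interarrival interval following the $i$th customer with $i<N$, the workload drops linearly from $cW_i+B_i$ to $cW_{i+1}$; on the final interval it reaches $0$ after $W_N+B_N/c$ time units and then stays at $0$ for the idle period $I = A_N - W_N - B_N/c$. A direct integration on each subinterval, followed by telescoping using $cW_{i+1}=cW_i+B_i-cA_i$ for $i<N$ and $W_1=0$, yields
\[\int_0^C e^{-sV_t}\,{\rm d}t \,=\, I + \frac{1}{cs}\sum_{i=1}^N e^{-scW_i}\bigl(1-e^{-sB_i}\bigr).\]

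Taking expectations is the second step. By renewal-reward over a cycle, $\mean\sum_{i=1}^N f(W_i,B_i)=\mean N\cdot\mean_\pi f(W,B)$, with $\mean_\pi$ the Palm expectation at arrival instants (equivalently, the stationary law of $(W_n,B_n)$). The crucial — and essentially only — place where the dependence structure intervenes is the observation that $W_i$ is a measurable function of the pairs $\{(A_j,B_j):j<i\}$, while in the model only $B_i$ and $A_i$ within the \emph{same} pair are correlated. Consequently $W_i\perp B_i$ for every $i$ and, in the limit, $W\perp B$ under the Palm law, so that
\[\mean\sum_{i=1}^N e^{-scW_i}\bigl(1-e^{-sB_i}\bigr) \,=\, \mean N\cdot\mean e^{-scW}\cdot\bigl(1-\mean e^{-sB}\bigr).\]

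Combining with Wald's identity $\mean C=\mean N\cdot\mean A$ and the identity $\mean I=\mean N(\mean A-\mean B/c)=(1-\rho)\mean C$ (obtained by equating mean busy time per cycle with mean total work per cycle divided by $c$) gives
\[\mean e^{-sV} \,=\, (1-\rho) + \frac{1-\mean e^{-sB}}{cs\,\mean A}\,\mean e^{-scW}.\]
Recognising $(1-\mean e^{-sB})/(s\mean B)$ as the LST $\mean e^{-sB^{\rm res}}$ of the residual service time and $\rho=\mean B/(c\mean A)$, this becomes
\[\mean e^{-sV} \,=\, (1-\rho) + \rho\,\mean e^{-scW}\cdot\mean e^{-sB^{\rm res}},\]
which is the LST form of \eqref{queueing relation formula} with $B^{\rm res}$ taken independent of $W$; uniqueness of the LST then delivers the distributional claim.

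The principal obstacle is justifying the two Palm-level identities in the dependent setting: the independence $W\perp B$ rests entirely on the model assumption that the correlation couples $B_i$ to the \emph{subsequent} $A_i$ (and not to the preceding one), while the Wald/renewal-reward identity over a cycle must be verified knowing that $(W_i,B_i)$ are not i.i.d.\ along the cycle (only the pairs $(A_i,B_i)$ are). Once these are isolated, Cohen's argument transfers essentially verbatim, and the remainder reduces to the telescoping computation above.
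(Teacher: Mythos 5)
Your proposal is correct and follows essentially the same route as the paper's proof: Cohen's regenerative decomposition over a busy cycle, the path-wise telescoping identity $\int_0^P e^{-sV_t}\,{\rm d}t=\frac{1}{cs}\sum_{i=1}^N e^{-scW_i}(1-e^{-sB_i})$, the key observation that $B_i$ is independent of $W_i$ and of the cycle-membership event (both being determined by the earlier pairs $(A_j,B_j)$, $j<i$), Wald's identity, and inversion of the resulting LST relation. The only cosmetic differences are that the paper factors out $1-\mean e^{-sB}$ directly via the decomposition $\sum_n e^{-scW_n}1_{\{N\geq n\}}(1-e^{-sB_n})$ rather than through a Palm-expectation statement, and applies Wald in the form $c\,\mean P=\mean N\,\mean B$ instead of $\mean C=\mean N\,\mean A$.
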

Remark that only the marginal distribution of the residual service requirement appears in the above, not the joint distribution of $A$ and $B$.
\begin{proof} Let 0 be the beginning of a busy period and $P$ be its length.
Following Cohen\cite{Cohen1976}, within this busy period, we may write (cf.\ Figure~\ref{fig1}): \[V_t=cW_{n(t)}+B_{n(t)} - c(t-t_{n(t)}),\] where
$V_t$ is the workload at time $t$,
$n(t)$ is the number of arrivals in $[0,t]$ and
$t_{n(t)}$ is the last arrival epoch before $t$. The following identities hold path-wise:

\[\int_0^{P}e^{-sV_t}{\rm d}t=\int_0^{P} e^{-s\left[cW_{n(t)}+B_{n(t)}-c(t-t_{n(t)})\right]}\,{\rm d}t\]
\begin{equation}=
\sum_{i=1}^{N-1} \int_0^{A_{i}}e^{-s(cW_i+B_i-ct)}{\rm d}t + \int_0^{A_{N}-I} e^{-s(cW_{N} +B_{N} -ct)}\,{\rm d}t. \label{regen1}
\end{equation}
Here $N$ is the number of customers served during a busy period. 
The key observation is that the following relation holds even when $A_i$ and $B_i$ are dependent:
\[\int_0^{A_{i}} e^{-s(cW_i+B_i-ct)}{\rm d}t = e^{-s(cW_i+B_i)}\frac{1}{cs}(e^{csA_{i}}-1).   \] There is no expectation taken so integration is carried out as usual, all these being path-wise identities. Formula  (\ref{regen1}) now becomes

\[\int\limits_{0}^Pe^{-sV_t}\,{\rm d}t= \frac{1}{cs}\sum\limits_{i=1}^{N-1} e^{-s(cW_i+B_i)}\left[ e^{csA_{i}}-1\right] + \frac{1}{cs}e^{-s(cW_{N}+B_{N})}\left[e^{cs(A_{N}-I)}-1\right] \]
\[= \frac{1}{cs}\sum\limits_{i=1}^{N-1}\left[e^{-s(cW_i+B_i-cA_i)}-e^{-s(cW_i+B_i)} \right] + \frac{1}{cs}e^{-s[cW_{N}+B_{N}-c(A_{N}-I)]}-\frac{1}{cs}e^{-s(cW_{N}+B_{N})}.\]

\noindent We make use of the following identities for the waiting time during a busy period:\newline For $i\leq N -1$, $cW_i+B_i-cA_i=cW_{i+1}$; and $cW_{N}+B_{N}-cA_{N}=-cI$, hence

\[\int\limits_{0}^Pe^{-sV_t}\,{\rm d}t= \frac{1}{cs}\sum\limits_{i=1}^{N-1}\left( e^{-scW_{i+1}}-e^{-scW_i-sB_i} \right) + \frac{1}{cs}\left[1-e^{-scW_{N}-sB_{N}} \right]\]

\begin{equation}\label{regen total workload}=\frac{1}{cs}\sum\limits_{i=1}^{N}e^{-scW_i}(1-e^{-sB_i}).\end{equation}
All derivations up to this point are path-wise manipulations, hence insensitive to correlations between $A_{i}$ and $B_i$.
Remark that $B_n$ is independent of $W_n$ but also of the r.v. $1_{\{N\geq n \}}$.
So if we take expectations in (\ref{regen total workload}) 

\[\mean \int\limits_0^P e^{-sV_t}\,{\rm d}t = \frac{1}{cs} \mean \sum\limits_{n=1}^{\infty} \left[e^{-scW_n}1_{\{N\geq n \}} (1- e^{-sB_n})\right]= \mean \left(\sum\limits_{n=1}^{\infty} e^{-scW_n}1_{\{N\geq n \}} \right)  \frac{1-\mean e^{-sB_1}}{cs},\]
So that
\begin{equation}
\label{regen2}
\mean \int\limits_0^P e^{-sV_t}\,{\rm d}t = \mean \left(\sum\limits_{i=1}^{N} e^{-scW_i}\right)\frac{1-\mean e^{-sB_1}}{cs}.
\end{equation}

A key remark is that the workload process is still regenerative with respect to the renewal sequence given by the epochs at which busy periods begin. Under the stability condition, the mean cycle length $\mean C$ of the workload process is finite,
 hence the stochastic mean value results still hold in this case (cf. Cohen\cite{Cohen1976}, Thm. 4.1) and we have the identities:
\[\mean e^{-sV} = \frac{1}{\mean C } \mean \int\limits_0^{C} e^{-s V_t}\,{\rm d}t, \]
and
\[\mean e^{-sW} = \frac{1}{\mean N} \mean \sum\limits_1^{N} e^{-s W_i}. \]
 We can now use these identities together with (\ref{regen2}) and $\mean C=\mean P+\mean I$, so we may write 

\[  \mean e^{-sV}= \frac{ \mean \int\limits_{0}^{P} e^{-sV_t}\,{\rm d}t+\mean I }{ \mean P+ \mean I} = \frac{\mean N \,\mean B}{\mean C} \mean e^{-sc W} \frac{1-\mean e^{-sB}}{cs\mean B} + \frac{\mean I}{\mean C} \]
Note that by definition, $P=\sum_{i=1}^N c^{-1}B_i$, with $B_i$ the i.i.d. sequence such that $B_i$ is the service requirement of the $i^{th}$ customer in a busy cycle. Hence Wald's identity gives $c\mean P=\mean N\,\mean B$, and using in addition  $\frac{\mean P}{\mean C}= \rho$, $\frac{\mean I}{\mean C}=1-\rho$ , we can rewrite the above as

\[\mean e^{-sV}= \rho \,\frac{1-\mean e^{-sB}}{s\mean B}\mean e^{-scW} + (1-\rho),\,\, \mathcal Re\,s\geq 0.\]
This can immediately be inverted to give the desired relation (\ref{queueing relation formula}).\end{proof}
\end{section}

\vspace{0.3cm}

\begin{section}{Duality between the insurance and queueing processes \label{S4}}
It is well known that there are duality relations between the classical $G/G/1$ queue and the corresponding classical Sparre Andersen insurance risk model, with independence between service requirements (respectively claim sizes) and inter-arrival times. In this case `corresponding' means: the same inter-arrival distributions, the service requirement distribution equals the claim size distribution, the service rate $c$ is the same as the premium rate.
There are two versions of the duality result (cf. Asmussen and Albrecher \cite{AsmussenRuin}, p. 45, 161):
\begin{equation}\label{waiting time duality} (i) ~~  \Psi_0(u)= \prob(cW>u), \end{equation}

\begin{equation} \label{delayed duality} (ii) ~ \Psi(u)= \prob(V>u). \end{equation}

Here $\prob(cW >u)$ is the tail of the amount of work as seen by an arriving customer in equilibrium, and $\prob(V >u)$ is the tail of the steady-state workload in the $G/G/1$ queue. $\Psi_0(u)$ is the ruin probability in the Sparre Andersen model, when at time $t=0$ the capital is $u$ and a new inter-arrival time begins, i.e., $t=0$ is an arrival epoch.
$\Psi(u)$ is the ruin probability when the risk process is started {\em in stationarity}, i.e., $t=0$ is independent of the process itself.
In this case the time elapsed until the first claim arrives has a residual distribution. We will call $\Psi_0(u)$ the ordinary ruin probability and $\Psi(u)$ the delayed ruin probability.

We pose and answer three questions in this section, for the dependencies under consideration
(between service requirement and subsequent inter-arrival time, respectively between inter-claim time and subsequent claim size):
\\
(1) Does the duality relation (\ref{waiting time duality}) still hold?
\\
(2) Does the duality relation (\ref{delayed duality}) still hold?
\\
(3) Does the relation between steady-state workload and waiting time from Theorem~\ref{queueing relation} translate
to a relation between delayed ruin probability and ordinary ruin probability, just as it does in the independent case (cf. p. 69 of Grandell
\cite{Grandell})?

The answer to question (1) is immediately seen to be positive,
as shown in Asmussen and Albrecher \cite{AsmussenRuin} p.45, because this relation uses only the random walk structure of the risk/queueing process embedded at arrival epochs, which is preserved in the model we study ($B_i$ and $A_i$ only appear in the random walk via the difference $B_i-cA_i$).
The Laplace transform of the ruin probability now immediately follows from the waiting time LST in Theorem~\ref{theorem delay},
by observing that
the relation $\Psi_0(u)=\mathbb P(cW>u)$ becomes in terms of transforms: $\Psi_0^*(s)=\frac{1}{s}\left(1-\mathbb Ee^{-scW}\right)$.
Hence we have:
\begin{Corollary} The Laplace transform of $\Psi_0(u)$, $\Psi_0^*(s):= \int_0^{\infty} e^{-su}\Psi_0(u)\,{\rm d}u$ equals
\[ \Psi_0^*(s)=\frac{1}{s}\left[1-\frac{\prod_{\tilde{s}^-_j}(1-\frac{cs}{\tilde{s}^-_j})}{\prod_{s^-_k}(1-\frac{cs}{s^-_k})}\right]. \]
\end{Corollary}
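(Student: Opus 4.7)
The plan is to combine two ingredients that are already on the table: the duality identity $\Psi_0(u)=\mathbb P(cW>u)$, which holds in the dependent setting exactly because $A_i$ and $B_i$ enter the Lindley recursion only via the difference $B_i-cA_i$ (as the paper notes immediately before the Corollary), and the explicit expression for $\mathbb Ee^{-sW}$ from Theorem~\ref{theorem delay}. The whole argument is then essentially a Laplace transform manipulation; there is no new analytic input required.

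First I would Laplace-transform the duality identity. Using the standard identity for a non-negative random variable $X$,
\[
\int_0^{\infty} e^{-su}\,\mathbb P(X>u)\,{\rm d}u = \frac{1-\mathbb E e^{-sX}}{s},\qquad \mathcal Re\,s>0,
\]
applied to $X=cW$, I obtain
\[
\Psi_0^*(s) = \int_0^{\infty} e^{-su}\,\mathbb P(cW>u)\,{\rm d}u = \frac{1-\mathbb E e^{-scW}}{s}.
\]

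Next I would substitute the expression for the LST of $W$ from Theorem~\ref{theorem delay} with the argument rescaled by $c$, giving
\[
\mathbb E e^{-scW} = \frac{\prod_{\tilde{s}^-_j}\bigl(1-\frac{cs}{\tilde{s}^-_j}\bigr)}{\prod_{s^-_k}\bigl(1-\frac{cs}{s^-_k}\bigr)}.
\]
Plugging this into the previous display yields the claimed formula for $\Psi_0^*(s)$.

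The only mild subtlety is to make sure the product formula remains valid after the rescaling $s\mapsto cs$, but since $c>0$ is a fixed positive constant this is just a linear change of variable in the argument of the transform and the zeros/poles $s_k^-$ and $\tilde s_j^-$ of $1-\mathbb E e^{-sY}$ are unchanged. So the Corollary really is an immediate consequence of the duality relation and Theorem~\ref{theorem delay}, with no genuine obstacle in the proof.
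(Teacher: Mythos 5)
Your proposal is correct and follows exactly the paper's own route: invoke the duality $\Psi_0(u)=\mathbb P(cW>u)$ (valid since the increments enter only through $B_i-cA_i$), apply the tail-integration identity to get $\Psi_0^*(s)=\frac{1}{s}\left(1-\mathbb Ee^{-scW}\right)$, and substitute the product formula of Theorem~\ref{theorem delay} evaluated at $cs$. No discrepancies to note.
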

Notice that, as mentioned in the Introduction,
this result was also obtained in Constantinescu et al.~\cite{Cstescu}, using operator theory.

We shall prove that the answer to question (3) is also affirmative. In combination with the duality relation (\ref{waiting time duality}),
this implies that the answer to question (2) is also affirmative: the duality relation (\ref{delayed duality}) still holds in the dependent case.

\vspace{0.2cm}

For the purpose of studying the relation between the ordinary and the delayed ruin functions below, we assume that the pair (A,B) has a joint density, $f_{A,B}(r,z)$.\newline
Let $\phi_0(u):=1-\Psi_0(u)$ and $\phi(u):=1-\Psi(u)$ be the survival functions for the ordinary risk process and for its stationary version, respectively. In addition, denote by $u$ the initial capital, 
and let $\alpha:=\frac{1}{\mean A}$ be the arrival rate of claims.

\begin{Theorem}\label{delay relation ruin}
The relation between the survival functions for the two versions of the ruin process is
\[\phi(u)=\phi(0)+ \frac{\alpha}{c} \int\limits_{v=0}^{\infty}\int\limits_{w=0}^u\phi_{0}(u-w)\int\limits_{z=w}^{\infty} f_{A,B}(v,z)\,{\rm d}z\, {\rm d}w\, {\rm d}v. \]
\end{Theorem}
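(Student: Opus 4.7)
The plan is to derive an integral equation for $\phi(u)$ by conditioning on the first claim of the stationary risk process, then turn that into a differential relation between $\phi$ and $\phi_{0}$, and finally integrate back to recover the stated formula. The whole argument hinges on correctly writing down the joint stationary law of the first inter-claim time and first claim, so this is where I would start.

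First I would compute the Palm/stationary joint density of the residual inter-claim time $A^{\rm res}$ and the first claim size $B^{\rm first}$. Length-biasing of the spread interval (density $v f_{A}(v)/\mean A$), together with conditional uniformity of the origin inside that interval and the fact that the claim at its right endpoint is distributed according to $f_{B\mid A}(z\mid v) = f_{A,B}(v,z)/f_{A}(v)$, gives
\[
f_{A^{\rm res},B^{\rm first}}(r,z) \;=\; \frac{1}{\mean A}\int_{r}^{\infty} f_{A,B}(v,z)\,{\rm d}v .
\]
The nontrivial point here is that $B^{\rm first}$ depends on the \emph{full} length of the current interval, not on its residual alone; this is where the dependence between the $A_i$'s and $B_i$'s enters.

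Next I would write down the integral equation. Conditioning on $(A^{\rm res},B^{\rm first})=(r,z)$ and using that after the first claim the process restarts as an ordinary Sparre Andersen process with capital $u+cr-z$,
\[
\phi(u) \;=\; \int_{0}^{\infty}\!\!\int_{0}^{u+cr}\phi_{0}(u+cr-z)\,f_{A^{\rm res},B^{\rm first}}(r,z)\,{\rm d}z\,{\rm d}r .
\]
Substituting the Palm density from the previous step and the change of variable $s=cr$, and setting $G(t,v):=\int_{0}^{t}\phi_{0}(t-z)f_{A,B}(v,z)\,{\rm d}z$, this rearranges to $\phi(u)=\frac{\alpha}{c}\int_{0}^{\infty}\int_{u}^{u+cv}G(t,v)\,{\rm d}t\,{\rm d}v$. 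Differentiating in $u$ yields $\phi'(u)=\frac{\alpha}{c}\int_{0}^{\infty}[G(u+cv,v)-G(u,v)]\,{\rm d}v$. The first piece integrates to $\phi_{0}(u)$ by the renewal equation satisfied by the ordinary survival function in this dependent Sparre Andersen model, while the second collapses by Fubini to $\int_{0}^{u}\phi_{0}(u-z)f_{B}(z)\,{\rm d}z$, with $f_{B}(z)=\int_{0}^{\infty}f_{A,B}(v,z)\,{\rm d}v$. Hence
\[
\phi'(u) \;=\; \frac{\alpha}{c}\Bigl[\phi_{0}(u)-\int_{0}^{u}\phi_{0}(u-z)f_{B}(z)\,{\rm d}z\Bigr].
\]

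Finally I would integrate from $0$ to $u$ and perform an integration by parts (swapping the order of the double integral and using $\overline{F}_{B}(0)=1$) to convert $\int_{0}^{u}\phi_{0}(y)\,{\rm d}y-\int_{0}^{u}f_{B}(z)\int_{0}^{u-z}\phi_{0}(s)\,{\rm d}s\,{\rm d}z$ into $\int_{0}^{u}\phi_{0}(u-w)\overline{F}_{B}(w)\,{\rm d}w$. Rewriting $\overline{F}_{B}(w)=\int_{0}^{\infty}\int_{w}^{\infty}f_{A,B}(v,z)\,{\rm d}z\,{\rm d}v$ puts the answer in exactly the displayed form. The main obstacle is conceptual rather than computational: getting the joint stationary density of $(A^{\rm res},B^{\rm first})$ right, since a naive ``independent residual'' argument would throw away the dependence structure encoded in $f_{A,B}$. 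The remaining analytic steps (Fubini, differentiation under the integral, integration by parts) are routine under the integrability guaranteed by the stability condition and the matrix-exponential framework.
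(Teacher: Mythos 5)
Your proposal follows essentially the same route as the paper: the length-biased Palm density for the first pair $(A^{\rm res},B^{\rm first})$ (the paper's Lemma on the residual density), conditioning on the first claim to get the same integral equation, differentiating and invoking the renewal equation for $\phi_0$, and integrating back to reach the Tak\'acs-type formula. The only cosmetic difference is that you collapse the final double integral by Fubini where the paper uses an integration by parts; both yield $\int_0^u\phi_0(u-w)\bigl(1-F_B(w)\bigr)\,{\rm d}w$, so the argument is correct and matches the paper's proof.
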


\vspace{0.2cm}

\noindent Let us make some remarks about this formula before proving it.

 \begin{remark}\label{reciprocal remark}In the stationary version of the ruin process, the first claim arrival happens after a time distributed as the residual inter-arrival time. Because of the correlation between claim sizes and their inter-arrival times, the claim size that corresponds to the residual arrival time will have a distinguished distribution; therefore let us denote the first pair by $(A^{res},B^*)$. Regarding the density function, it can be shown that (see Lemma \ref{lemma residual density})

 \begin{equation}\label{reciprocal density}f_{A^{res},B^*}(r,z)= \alpha\int\limits_{v=r}^\infty f_{A,B}(v,z)\,{\rm d}v.\end{equation}
\end{remark}

\vspace{0.2cm}
\noindent \begin{remark}\label{marginal term} The double integral that appears in the last term from Theorem \ref{delay relation ruin} above:
\[\int\limits_{v=0}^{\infty}\int\limits_{z=w}^{\infty} f_{A,B}(v,z)\,{\rm d}z\,{\rm d}v \] equals the marginal tail of a claim size, $1-F_B(w)$. If we replace this in the relation from Theorem \ref{delay relation ruin}, we obtain the same formula as in Grandell\cite{Grandell} p.69:

\begin{equation}\label{Grandell relation} \phi(u)=\phi(0)+ \frac{\alpha\mean B}{c}\int\limits_{w=0}^u\phi_{0}(u-w)\frac{1-F_B(w)}{\mean B}\,{\rm d}w. \end{equation}
This is also known as Tak\'acs' formula (see  \cite{Franken}, Corollary 4.5.4). (\ref{Grandell relation}) shows that only the marginal residual service requirement appears in this relation between
$\phi(\cdot)$ and $\phi_0(\cdot)$, even if we have the correlation between a pair $(A,B)$.

By using the fact that $\phi(u)$, $\phi_0(u)\rightarrow 1$ as $u\rightarrow \infty$, together with dominated convergence, to argue that it is allowed to interchange limit and integration, one can easily show that $\phi(0)=1-\frac{\alpha\mean B}{c}$. Now observe that Relation (\ref{Grandell relation})
between delayed and ordinary survival function
is the precise counterpart/equivalent of
relation (\ref{queueing relation formula}) between the workload and waiting time distributions.
\end{remark}

\begin{proof}[Proof of Theorem \ref{delay relation ruin}] We follow the derivation that Grandell\cite{Grandell} (p. 69, see also p.5) has given for the case when $A$ and $B$ are independent. Starting with the stationary risk process, we condition on the arrival time of the first claim, together with its size:

\[ \phi(w)=\int\limits_{r=0}^{\infty} \int\limits_{z=0}^{w+cr}\phi_0(w+cr-z)f_{A^{res},B^*}(r,z)\,{\rm d}z\,{\rm d}r.  \]
Using (\ref{reciprocal density}) we obtain: 
\[\phi(w)=\alpha \int\limits_{r=0}^{\infty} \int\limits_{v=r}^{\infty} \int\limits_{z=0}^{w+cr}\phi_0(w+cr-z)  f_{A,B}(v,z)\,{\rm d}z\,{\rm d}v\,{\rm d}r. \]
By changing the order of integration between variables $v$ and $r$,  we have:
\[\phi(w)=\alpha \int\limits_{v=0}^\infty \int\limits_{r=0}^v\int\limits_{z=0}^{w+cr}f_{A,B}(v,z) \,\phi_0(w+cr-z)\,{\rm d}z\,{\rm d}r\,{\rm d}v.\]
We use the change of variable $x:=w+cr$:
\begin{equation}\label{primitive}\phi(w)=\frac{\alpha}{c} \int\limits_{v=0}^\infty \int\limits_{x=w}^{w+cv}\int\limits_{z=0}^{x}f_{A,B}(v,z)\,
\phi_0(x-z)\,{\rm d}z\,{\rm d}x\,{\rm d}v.\end{equation}
Let us take the derivative of $\phi(w)$. In Lemma \ref{differentiation} in the Appendix we argue that this is allowed.
\[\phi'(w)=\frac{\alpha}{c} \int\limits_{v=0}^\infty \left[\int\limits_{z=0}^{w+cv}f_{A,B}(v,z)\,\phi_0(w+cv-z){\rm d}z - \int\limits_{z=0}^w f_{A,B}(v,z)\,\phi_0(w-z) {\rm d}z \right] {\rm d}v  \]
\[=\frac{\alpha}{c} \phi_0(w) - \frac{\alpha}{c} \int\limits_{v=0}^\infty \int\limits_{z=0}^{w}f_{A,B}(v,z)\,\phi_0(w-z)\,{\rm d}z\,{\rm d}v.  \]
Here we replaced the first term in the right-hand side by virtue of the renewal equation for the {\em ordinary} survival probability. We can now integrate $w$ between $0$ and $u$:

\begin{equation}\label{integral surv}\phi(u)-\phi(0)= \frac{\alpha}{c}\int\limits_{w=0}^u \phi_{0}(w)\,{\rm d}w -\frac{\alpha}{c}\int\limits_{w=0}^u \int\limits_{v=0}^{\infty}\int\limits_{z=0}^w\phi_0(w-z)\,f_{A,B}(v,z)\,{\rm d}z\,{\rm d}v\,{\rm d}w. \end{equation}

Let us focus on the last term from (\ref{integral surv}), to be called $L$.
Integration over $v$ yields, with $f_B(\cdot)$ the density of the service requirement distribution $F_B(\cdot)$:

\begin{equation}\label{intermed term}
L = \frac{\alpha}{c} \int\limits_{w=0}^u \int\limits_{z=0}^{w} \phi_0(w-z) f_{B}(z) {\rm d}z\,{\rm d}w. \end{equation}
Partial integration gives:
\begin{eqnarray}
L &=& \frac{\alpha}{c} \int\limits_{w=0}^u \phi_0(0) F_B(w)\, {\rm d}w + \frac{\alpha}{c} \int_{w=0}^u \int_{z=0}^w F_B(z) \phi'_0(w-z)\, {\rm d}z \,{\rm d}w
\nonumber
\\
&=&
\frac{\alpha}{c} \int\limits_{w=0}^u \phi_0(0) F_B(w) \,{\rm d}w +\frac{\alpha}{c}
\int_{z=0}^u F_B(z) \int_{w=z}^u \phi'_0(w-z)\, {\rm d}w\, {\rm d}z  \notag
\\
&=&
\frac{\alpha}{c} \int_{w=0}^u \phi_0(0) F_B(w)\, {\rm d}w +\frac{\alpha}{c}
\int_{z=0}^u F_B(z) [\phi_0(u-z) - \phi_0(0)] \,{\rm d}z \notag
\\
&=&
 \frac{\alpha}{c}
\int_{z=0}^u F_B(z) \phi_0(u-z)\, {\rm d}z . \label{L term}
\end{eqnarray}
Substitution of (\ref{L term}) in (\ref{integral surv}) gives (\ref{Grandell relation}) and thus the result of the proposition.
\end{proof}
\end{section}

\vspace{0.3cm}

\begin{section}{Examples and Numerical results\label{Section numerics}}

In this section we present examples of dependence structures which are tractable and have a probabilistic interpretation. We also numerically illustrate the effect of correlations on the waiting time distribution/ruin probability.
Throughout the section we take for simplicity $c=1$.

A comprehensive survey of multivariate matrix-exponential distributions (MVME) can be found in Bladt and Nielsen~\cite{Bladt:MVME}. As a special subclass of these, Kulkarni~\cite{Kulkarni:MPH*} introduced multivariate phase-type (MPH) distributions (see also Assaf et al.~\cite{Assaf:MPH}). In the bivariate case, these are defined as follows: Consider a continuous-time Markov chain $X(t)$, with finite state space $\mathcal S$, with an absorbing state $\Delta$, and generator matrix

\[ \textbf Q =\left(\begin{tabular}{cc}$Q$ & $-Q\boldsymbol{1}$ \\0 &0 \end{tabular}\right)\]

\noindent together with a reward matrix $(r^{(j)}_x)_{x,j}$, $r^{(j)}_x\geq 0$ for $x\in \mathcal S\backslash\{\Delta\} $, $j=1,2$. Assume that as long as we stay in state $x$, we earn at rate vector $\textbf{r}_x=(r_x^{(1)},r_x^{(2)})$. We look at the bivariate distribution of the random vector $(Z_1,Z_2)$, where the marginals of this vector are defined to be the total accumulated rewards until absorption:
\begin{displaymath} Z_k=\int_0^\zeta r^{(k)}_{X(t)}{\rm d}t, \end{displaymath} with $\zeta$ the time to absorption. Remark that $Z_k$ can be rewritten as

\begin{equation}\label{MPH*} Z_k=\sum_{i=1}^M r_{X_i}^{(k)}H_{i},\;\;\;k=1,2,\end{equation}
$M$ being the number of jumps until absorption of the embedded discrete-time Markov chain $X_i$ and $H_{i}$ the holding time in state $X_i$. The $H_{i}$'s are independent exponentials with rates $-Q_{X_iX_i}$. 
The dependence structure between $Z_1$ and $Z_2$ is thus given by the underlying continuous-time Markov chain $X(t)$. 
That this is indeed a subclass of MVME, follows from ~\cite{Bladt:MVME}, Theorem 4.1.

As a special case of Kulkarni's bivariate-phase type distributions, one can obtain a fairly large class of distributions by a partial decoupling of the bivariate phase-type: For the discrete-time Markov chain $X_i$, and for a fixed $i$, let $H_{i}^{(1)},H_{i}^{(2)}$ be  independent, having exponential distributions with rates $\lambda_{X_i}$ and $\mu_{X_i}$, respectively. Without loss of generality we can consider $r^{(k)}_{X_i}=1$, $k=1,2$ and set

 \[ A=\sum_{i=1}^M H_{i}^{(1)}, \;\;\; B=\sum_{i=1}^M H_{i}^{(2)}. \]

 The difference with Formula (\ref{MPH*}) is that now  the dependence structure is given only by the common underlying discrete-time Markov chain $X_i$. Furthermore, if we assume the jump rates to be the same in each state, i.e. $H_{i}^{(1)}\sim$ exp$(\lambda)$, $H_{i}^{(2)}\sim$ exp$(\mu)$, then the number of jumps $M$ before absorption is a sufficient statistic for the joint distribution of $(A,B)$. More precisely, conditional on $M$, $A$ and $B$ are independent Erlang$(M,\lambda)$, Erlang$(M,\mu)$ respectively.

 \begin{remark}
 This dependence structure can be realized as in the description of Kulkarni's class. More precisely, we obtain the partial decoupling by doubling all states of the underlying Markov Process: replace each transient state $x$ with $x_1$, $x_2$ and allow only the corresponding component of $(A,B)$ to increase while in state $x_i$ (formally, put $r^{(1)}_{x_1}=r^{(1)}_{x}$, $r^{(1)}_{x_2}=0$ and similarly $r^{(2)}_{x_1}=0$, $r^{(2)}_{x_2}=r^{(2)}_{x}$). Extend the transition matrix of the Markov Chain such that after visiting state $x_1$, it always jumps to state $x_2$ and thereafter jumps according to the original transition matrix. 
 \end{remark}
 If we denote by $\boldsymbol\alpha$ the initial distribution of $\{X_n\}$, by $T$ the transient component of its transition matrix, and by $\boldsymbol t$ the vector of exit probabilities, then by conditioning on $M$ we obtain the following result as a probabilistic alternative to Theorem 3.2 in Bladt\&Nielsen~\cite{Bladt:MVME}:
 \begin{Lemma}

 \begin{itemize}  \item[a)]  The Laplace-Stieltjes transform of $(A,B)$ is: \[\mean e^{-s_1A-s_2B}= \boldsymbol\alpha'\left[\frac{(\lambda+s_1)(\mu+s_2)}{\lambda\mu}I-T\right]^{-1}\textbf{t}.\]

\item[b)]The transform $\mean e^{-sY}$ of the difference $(B-A)$,  is a rational function of the form $\frac{f(s)}{g(s)}$, with $f$ and $g$ polynomial functions such that $\deg(f)<\deg(g)$.

\end{itemize}
\label{LS Lemma}
\end{Lemma}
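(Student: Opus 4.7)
The plan for part (a) is to condition on $M$, exploiting the state-independence of the holding-time rates $\lambda$ and $\mu$: conditional on $M=m$, the two sums $A$ and $B$ are independent and respectively Erlang$(m,\lambda)$ and Erlang$(m,\mu)$. Hence the conditional joint LST factorises as $z^m$, with
\[
z\;:=\;\frac{\lambda\mu}{(\lambda+s_1)(\mu+s_2)}.
\]
Combining this with the standard identity $\prob(M=m)=\boldsymbol\alpha' T^{m-1}\mathbf{t}$ for $m\geq 1$ and summing the Neumann series $\sum_{m\geq 1}z^m T^{m-1}=z(I-zT)^{-1}$ yields
\[
\mean e^{-s_1A-s_2B}=\boldsymbol\alpha'\,z(I-zT)^{-1}\mathbf{t}=\boldsymbol\alpha'\bigl(z^{-1}I-T\bigr)^{-1}\mathbf{t},
\]
and the announced formula follows by substituting $z^{-1}=(\lambda+s_1)(\mu+s_2)/(\lambda\mu)$.

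For part (b), I would specialise part (a) to $s_1=-s$, $s_2=s$, producing
\[
\mean e^{-sY}=\boldsymbol\alpha'\Bigl[\tfrac{(\lambda-s)(\mu+s)}{\lambda\mu}I-T\Bigr]^{-1}\mathbf{t}.
\]
Writing the matrix inverse as adjugate over determinant immediately displays this quantity as $f(s)/g(s)$ with $f$ and $g$ polynomial. For the strict degree inequality, let $n$ be the order of $T$: the determinant is a polynomial of degree $n$ in the scalar argument $(\lambda-s)(\mu+s)/(\lambda\mu)$, hence of degree exactly $2n$ in $s$, while every $(n-1)\times(n-1)$ minor in the adjugate is of degree at most $2(n-1)$ in $s$. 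Contracting with the fixed vectors $\boldsymbol\alpha'$ and $\mathbf{t}$ preserves these bounds, so $\deg f\leq 2n-2<2n=\deg g$.

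The main technical delicacy lies in justifying the interchange of sum and expectation, equivalently the convergence of the Neumann series in (a). I would address this by noting that, since every state in $\mathcal S\setminus\{\Delta\}$ is transient, the spectral radius $\rho(T)$ is strictly less than one; for $\mathcal Re\,s_1,\mathcal Re\,s_2\geq 0$ one has $|z|\leq 1$, so $\rho(zT)<1$ and the series converges absolutely on this closed half-plane. Both sides of the claimed identity are then rational, hence meromorphic, functions that agree on an open set, so the formula extends to the full natural domain of the LST by analytic continuation. Everything else reduces to routine algebraic bookkeeping.
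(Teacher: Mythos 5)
Your proposal is correct and follows essentially the same route as the paper: condition on $M$, recognise the probability generating function of the discrete phase-type variable $M$ at $z=\frac{\lambda\mu}{(\lambda+s_1)(\mu+s_2)}$ to get the resolvent form $\boldsymbol\alpha'(z^{-1}I-T)^{-1}\boldsymbol t$, and for (b) substitute $s_1=-s$, $s_2=s$ and compare the degrees of the determinant and the adjugate entries. The only cosmetic difference is that you derive the identity $P_M(z)=\boldsymbol\alpha'(z^{-1}I-T)^{-1}\boldsymbol t$ via the Neumann series with an explicit convergence argument, whereas the paper simply cites it (Asmussen, Prop.\ 4.1).
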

\noindent
\textit{Proof:} see Appendix.

\vspace{0.2cm}

\noindent\textbf{Examples:}

$\boldsymbol{1}$. Kibble and Moran's bivariate Gamma distribution (Kotz et al.~\cite{Kotz2000}) can be realized as above. Consider the state space $\{1,...,m,\Delta\}$. Assume the Markov Chain $X_n$ starts in $1$ and jumps from $i$ to $i+1$ w.p. $p$ or stays in state $i$ w.p. $1-p$. Furthermore, assume the same rates for the holding times in every state: $H_n^{(1)}\sim$exp$(\lambda)$, $H_n^{(2)}\sim$exp$(\mu)$, for $\lambda$, $\mu>0$. 
Hence this distribution is the $m-$fold convolution of Kibble and Moran's bivariate exponential with itself (cf.~\cite{Kotz2000}), where this bivariate exponential distribution can be represented as

\[\left(Erlang(\lambda,M),Erlang(\mu,M)\right),\]
with $M$ having a geometric distribution. In the insurance risk setting, the analysis for this example has
been done in Ambagaspitiya~\cite{amba:risk} and also in Constantinescu et al.~\cite{Cstescu} using operator theory. The Laplace transform of the ordinary ruin probability $\Psi_0(u)$ is given by 
\[ \Psi_0^*(s)=\frac{1}{s}\left[1-\frac{(1-\frac{s}{b})^m}{\prod_{s_k}(1-\frac{s}{s_k})}\right],\]
with $b$ the pole of order $m$ of $1-\mean e^{-sY}$ such that $\mathcal Re\;b<0$.

$\boldsymbol{2}$. Cheriyan and Ramabhadran's bivariate Gamma is another  example of Kulkarni's bivariate phase-type. This was also analyzed in Ambagaspitiya~\cite{amba:risk}
in the insurance risk setting.

For nonnegative integers $m_0,m_1,m_2$, consider the state space $\mathcal S=\{1,...,m_0+m_1+m_2,\Delta\}$, with the set of transient states partitioned as: $\mathcal S\backslash\{\Delta\}=\mathcal S_0\cup\mathcal S_1\cup \mathcal S_2$ with $\mathcal S_0=\{1,...,m_0\}$, $\mathcal S_1=\{m_0+1,...,m_0+m_1\}$, $\mathcal S_2=\{m_0+m_1+1,...,m_0+m_1+m_2\}$. The chain starts in state 1 and jumps from state $i$ to $i+1$. The jump rates are $\beta_k$ while in state $x\in\mathcal S_k$, $k\in\{0,1,2\}$. The reward rates in state $x$ are $ r_x^{(1)}=r_x^{(2)}=1$ for $ x\in \mathcal S_0$; $r_x^{(1)}=1, r_x^{(2)}=0$ for $x\in \mathcal S_1$, and $ r_x^{(1)}=0$, $r_x^{(2)}=1$ for $ x\in \mathcal S_2$. Then the bivariate total accumulated reward has a distribution of the form

\[(A,B)\deq (Z_0+Z_1,Z_0+Z_2),\]
where $Z_k$ are mutually independent $\sim$Erlang$(m_k,\beta_k)$, $k\in\{0,1,2\}$.

$\boldsymbol{3}$. In the class of MVME, it is possible to achieve negative correlation as well. Consider $M$ to be a discrete random variable with finite support: $M\in \{ 1,...,K\}$, for $K$ some positive integer. Negative correlation can be achieved if we consider the following mixture of Erlang distributions:
\[(A,B)_-\deq \left(Erlang(\lambda,M),Erlang(\mu,K-M+1)\right).\]
For more examples of negatively correlated phase-type distributions, we refer to \cite{BoNegCor}.
\vspace{0.2cm}

\textbf{Stochastic ordering results.} We compare the tails of the waiting times for the mixed Erlang distributions in the  following scenarios: the negatively correlated one from Example 3 versus the positively correlated case

\[ (A,B)_+\deq \left(Erlang(\lambda,M),Erlang(\mu,M)\right),\]
 and the corresponding independent pair obtained by sampling twice from the distribution of $M$; i.e. for $M_1$ and $M_2$ i.i.d. copies of $M$

\[ (A,B)_0\deq \left(Erlang(\lambda,M_1),Erlang(\mu,M_2)\right).\]
Here $M$ is taken to have finite support, as in Example 3 above.


  Denote respectively by $D_-$, $D_+$ and $D_0$, the differences $A-B$ in the three scenarios above. In Theorem \ref{order thm} below  we show that under a mild assumption on the distribution of $M$, there exists \emph{convex ordering} between the random variables $D_+$, $D_0$ and $D_-$. For two r.v.'s $X$ and $Y$, $X\preceq_{cx} Y$ means, by definition, that for any arbitrary convex function $\varphi(x)$,

\begin{equation}\label{cx functional}\mean \varphi(X)\leq \mean \varphi(Y).\end{equation}
  For more about the notion of convex order and other related stochastic orderings, we refer the reader to \cite{Stoyan&Daley}, Ch. 1.
 Before we give the result, let us recall a useful criterion (cf.\cite{Stoyan&Daley}, Prop. 1.5.1):

\begin{Proposition}[Karlin \& Novikoff's cut criterion] \label{KarlinNovikoff} For $X$, $Y$ r.v.'s with c.d.f.'s $F_X$ and $F_Y$ respectively, and finite first moments, assume that $\mean X=\mean Y$, and that there exists an $x_0$ such that $F_X(x)\leq F_Y(x)$, for $x\leq x_0$ and $F_X(x)\geq F_Y(x)$ for $x\geq x_0$. Then $X\preceq_{cx} Y$.
\end{Proposition}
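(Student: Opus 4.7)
The plan is to reduce the convex-ordering conclusion to the classical stop-loss characterization: $X \preceq_{cx} Y$ holds if and only if $\mean X = \mean Y$ \emph{and} $\mean (X-a)^+ \leq \mean (Y-a)^+$ for every $a \in \mathbb{R}$. Since equality of means is part of the hypothesis, the entire argument boils down to verifying the stop-loss inequality, and the cut condition will supply exactly what is needed.

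First I would write down the tail representations $\mean(X-a)^+ = \int_a^{\infty} (1-F_X(x))\,{\rm d}x$ and $\mean(a-X)^+ = \int_{-\infty}^a F_X(x)\,{\rm d}x$, which are valid under the finite-mean hypothesis, and then split on the position of $a$ relative to $x_0$. For $a \geq x_0$, the cut condition gives $F_X \geq F_Y$ throughout $[a,\infty)$, hence $1-F_X \leq 1-F_Y$ there, and integration yields $\mean(X-a)^+ \leq \mean(Y-a)^+$ directly. For $a < x_0$, I would instead use the pointwise identity $(x-a)^+ - (a-x)^+ = x-a$; taking expectations for $X$ and for $Y$ and subtracting gives, because $\mean X = \mean Y$, the equality $\mean(X-a)^+ - \mean(Y-a)^+ = \mean(a-X)^+ - \mean(a-Y)^+ = \int_{-\infty}^a (F_X(x)-F_Y(x))\,{\rm d}x$. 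The cut condition makes the integrand non-positive on $(-\infty,x_0]\supseteq(-\infty,a]$, so the right-hand side is $\leq 0$, closing the remaining case.

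The only non-routine ingredient is the stop-loss characterization itself. This rests on the fact that any convex $\varphi$ on $\mathbb{R}$ can be written, up to an affine correction, as a mixture of the kernels $(x-a)^+$ and $(a-x)^+$ against non-negative Radon measures coming from its distributional second derivative. Fubini then transfers the pointwise stop-loss inequality to $\mean \varphi(X) \leq \mean \varphi(Y)$, with the linear part of $\varphi$ absorbed by the equality of means. The main subtlety I would need to track is ensuring $\mean|\varphi(X)|,\mean|\varphi(Y)| < \infty$ for the Fubini interchange to be legitimate; beyond that, no further machinery is required. Since this entire argument is classical and the proposition is explicitly attributed to Stoyan \& Daley \cite{Stoyan&Daley}, in practice one would simply invoke the reference, and the sketch above is what one would spell out if a self-contained derivation were demanded.
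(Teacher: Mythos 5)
Your argument is correct, but note that the paper itself does not prove this proposition at all: it is quoted verbatim as Proposition 1.5.1 of Stoyan \& Daley \cite{Stoyan&Daley} and used as a black box, so there is no in-paper proof to match. Your self-contained derivation is the standard one and it checks out: reduce convex order to stop-loss order plus equal means, write $\mean(X-a)^+=\int_a^{\infty}(1-F_X(x))\,{\rm d}x$ and $\mean(a-X)^+=\int_{-\infty}^{a}F_X(x)\,{\rm d}x$, and split at the crossing point --- for $a\geq x_0$ the tail comparison is immediate from $F_X\geq F_Y$ on $[a,\infty)$, and for $a<x_0$ the identity $(x-a)^+-(a-x)^+=x-a$ together with $\mean X=\mean Y$ converts the stop-loss difference into $\int_{-\infty}^{a}\bigl(F_X(x)-F_Y(x)\bigr)\,{\rm d}x\leq 0$. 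The only ingredient you treat as "non-routine," the stop-loss characterization of $\preceq_{cx}$, is itself classical (integral representation of a convex function against the kernels $(x-a)^+$ and $(a-x)^+$ with a nonnegative second-derivative measure), and your worry about Fubini is mild: since those kernels and the mixing measure are nonnegative, Tonelli applies directly, and the inequality holds whenever the expectations exist, with the affine part absorbed by equality of means. So your route buys a self-contained, elementary proof at the cost of a page of standard material, whereas the paper buys brevity by citation; both are legitimate, and for the purposes of Theorem \ref{order thm} the citation suffices.
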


\begin{Theorem}\label{order thm}
\begin{equation}\label{positiveorder} D_+\preceq_{cx} D_0. \end{equation}

 Moreover, if $M$ has a symmetric distribution,  $M\deq K+1-M$, then we also have
\begin{equation}\label{negativeorder} D_0 \preceq_{cx} D_-. \end{equation}

\end{Theorem}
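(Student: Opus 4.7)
My plan is to deduce both orderings from a single submodularity property. The key observation is that $D_+$, $D_0$ and $D_-$ can all be generated by the same family of kernels $(m_1, m_2) \mapsto A_{m_1} - B_{m_2}$, where $A_{m_1} \sim \mathrm{Erlang}(\lambda, m_1)$ and $B_{m_2} \sim \mathrm{Erlang}(\mu, m_2)$ are independent; only the joint distribution of the indices changes. Namely $D_+$ uses the comonotonic coupling $(M, M)$, $D_0$ the independent coupling $(M_1, M_2)$, and $D_-$ the counter-monotonic coupling $(M, K+1-M)$. Under the symmetry hypothesis $M \deq K+1-M$ the three couplings share the same marginals, and $\mean D_+ = \mean D_0 = \mean D_-$ follows immediately; the symmetry is exactly what is needed for the identity involving $D_-$.

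The main lemma I would establish is that for every convex $\psi$ the function $\Phi(m_1, m_2) := \mean\,\psi(A_{m_1} - B_{m_2})$ is submodular on $\{1,2,\ldots\}^2$, i.e.,
\[
\Phi(m+1, n+1) + \Phi(m, n) \;\leq\; \Phi(m+1, n) + \Phi(m, n+1).
\]
Writing $A_{m+1} = A_m + X$ and $B_{n+1} = B_n + Y$ with $X \sim \mathrm{Exp}(\lambda)$, $Y \sim \mathrm{Exp}(\mu)$ independent of $U := A_m - B_n$, this reduces to the almost-sure inequality
\[
\psi(U + X) + \psi(U - Y) \;\geq\; \psi(U) + \psi(U + X - Y).
\]
Since $X, Y \geq 0$, the two-atom measures $\tfrac{1}{2}(\delta_{U-Y} + \delta_{U+X})$ and $\tfrac{1}{2}(\delta_{U} + \delta_{U+X-Y})$ share the common mean $U + (X-Y)/2$ and their CDFs cross exactly once, so Proposition \ref{KarlinNovikoff} delivers the inequality fibre by fibre; integrating over the joint law of $(X, Y, U)$ yields the submodularity of $\Phi$.

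With submodularity in hand I would invoke the classical concordance ordering of bivariate distributions with fixed marginals: among all such couplings, the comonotonic one minimizes and the counter-monotonic one maximizes the expectation of any submodular function, with the independent coupling in between. Applied to our three index couplings this gives $\mean\,\psi(D_+) \leq \mean\,\psi(D_0) \leq \mean\,\psi(D_-)$ for every convex $\psi$, which is precisely $D_+ \preceq_{cx} D_0$ and $D_0 \preceq_{cx} D_-$. I expect the submodularity lemma to be the only genuine technical step; the concordance-ordering conclusion is standard and can itself be carried out by applying Proposition \ref{KarlinNovikoff} to suitable two-index pairs and then averaging over the law of the index pair.
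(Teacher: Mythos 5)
Your proof is correct, but it is organized differently from the paper's. You isolate the key property as submodularity of $\Phi(m_1,m_2)=\mean\,\psi(A_{m_1}-B_{m_2})$ for convex $\psi$, proved by the pointwise inequality $\psi(U-Y)+\psi(U+X)\geq \psi(U)+\psi(U+X-Y)$ (the two pairs have equal sums and the first is a spread of the second, so plain convexity suffices — you do not even need Proposition~\ref{KarlinNovikoff} there), and then you invoke the classical concordance/supermodular-order theorem (Lorentz; Cambanis--Simons--Stout; Tchen) stating that with fixed marginals the comonotonic coupling minimizes and the counter-monotonic coupling maximizes the expectation of a submodular function, with independence in between; the symmetry $M\deq K+1-M$ is used exactly where it should be, to make $(M,K+1-M)$ a counter-monotonic coupling with the same marginals. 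The paper instead gives a self-contained rearrangement argument: it pairs terms across the diagonal (resp.\ anti-diagonal) of the tableau $(\varphi(i,j))$, reduces to the rectangle inequality (\ref{modular1}), and proves that via the Erlang decomposition $C_j=C_i+C_u$ together with a Bernoulli$(1/2)$ mixture and the cut criterion, i.e.\ it effectively proves by hand the special instances of the concordance theorem it needs (and it works with increasing convex functions first, then uses equal means, a step your all-convex argument makes unnecessary). Your route buys brevity and generality — the same two lines give both orderings, and in fact monotone-concordance comparisons for arbitrary couplings of the indices — at the price of importing an external theorem; the paper's is longer but elementary and self-contained. One caution: your closing remark that the concordance conclusion ``can itself be carried out by applying Proposition~\ref{KarlinNovikoff} to suitable two-index pairs and then averaging'' is too optimistic as stated — comparing the independent coupling with a Fr\'echet bound genuinely requires a swapping/rearrangement argument over pairs of indices (essentially the paper's diagonal pairing, or a telescoping sequence of mass transfers each controlled by submodularity), so either cite the classical theorem or carry out that rearrangement explicitly.
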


\begin{proof}
Let $C_i^\lambda$ and $C_j^\mu$ respectively be Erlang$(i,\lambda)$ and Erlang$(j,\mu)$ distributed  random variables independent of each other, for $i=1,..,K$; also denote $\pi_i :=\prob (M=i)$.

 We will first prove $icx$ ordering, that is the functional inequality (\ref{cx functional}) is restricted to \emph{increasing} convex functions $\varphi$. This together with the fact that the expected values of $D_-$, $D_+$ and $D_0$ are the same implies $cx$ ordering (see \cite{Stoyan&Daley}, Thm. 1.3.1, p.9).

 Take $\varphi$ to be any convex and increasing function. Firstly, we prove (\ref{positiveorder}), that is, we must show that $\mean \varphi(D_+) \leq \mean \varphi(D_0)$, or equivalently,

\[ \sum_{i=1}^K \pi_i \mean \varphi(C_i^\lambda - C_i^\mu) \leq \sum_{i=1}^K\sum_{j=1}^K \pi_i\pi_j\mean \varphi(C^\lambda_i-C^\mu_j). \]
Let us put for simplicity $\varphi(i,j):=\mean \varphi(C^\lambda_i-C^\mu_j)$, so we can rewrite the above as

\begin{equation}\label{association}
\sum_{i} \pi_i \varphi(i,i) \leq \sum_i\sum_j  \pi_i\pi_j\varphi(i,j).
\end{equation}
Note that (\ref{association}) is an association type of inequality, similar to Cebishev's inequality (see \cite{AsmussenRolski}, Lemma 2.3 and the references therein). Using that the $\pi_j's$ form a probability distribution, we can further rewrite (\ref{association})

\[ \sum_i\sum_j \pi_i\pi_j \varphi(i,i) \leq \sum_i\sum_j  \pi_i\pi_j \varphi(i,j) \]
\begin{equation}\label{tableaux1}\Leftrightarrow \sum_i\sum_{j>i} \pi_i\pi_j \left[\varphi(i,i)-\varphi(i,j)\right] \leq \sum_m\sum_{l<m} \pi_m\pi_l [\varphi(m,l) - \varphi(m,m)]. \end{equation}

Remark that there is an equal number of terms on the two sides of (\ref{tableaux1}) because we sum over indices that lie respectively above and  below the main diagonal of the tableaux $(\varphi(i,j))_{i,j}$.
We are done as soon as we show that the inequality holds for a one-to-one correspondence between these indices; more precisely, for the correspondence $(i,j)\leftrightarrow (j,i)$, $j>i$, we will prove that

\begin{equation}\label{modular1}\varphi(i,i) - \varphi(i,j) \leq \varphi(j,i) - \varphi(j,j),\end{equation} 
that is, (\ref{tableaux1}) holds term by term, and remark that the coefficients $\pi_i\pi_j$ and $\pi_j\pi_i$ cancel against each other. Put $u:=j-i$ and
denote

\[\gamma(x):= \mean \varphi(x+C^\lambda_i-C_j^\mu).\]

\noindent Obviously, $\gamma(x)$ is increasing and convex, because $\varphi$ is.
Consider the decomposition of $C_j^{\mu}$ and $C_j^{\lambda}$ as sums of independent r.v.'s $C_j^\mu:= C_i^\mu+C_u^\mu$, and $C_j^\lambda:= C_i^\lambda+C_u^\lambda$ with $C_u^\mu$, $C_u^\lambda$ Erlang distributed of order $u$ and rates $\mu$ and $\lambda$, respectively.
By conditioning on $C_u^\lambda$ and $C_u^\mu$, we can write

\[\varphi(j,i) = \mean \{\mean [\varphi(y+C_i^\lambda - C_i^\mu - x +x) | C_u^\lambda=y,C_u^\mu=x]  \} \Leftrightarrow    \]

\[\varphi(j,i) = \mean \{\mean[\gamma(y+x)|C_u^\lambda=y,C_u^\mu=x ]\} = \mean \gamma(C_u^\lambda + C_u^\mu).\]

\noindent  Similarly, we obtain $\varphi(i,i) = \mean\gamma(C_u^\mu)$ and $\varphi (j,j)=\mean \gamma(C_u^\lambda)$, so that (\ref{modular1}) becomes


\begin{equation}\label{modular intermed} \mean \gamma(C_u^\mu)+ \mean \gamma(C_u^\lambda) \leq \mean \gamma(C_u^\lambda + C_u^\mu) + \gamma(0) .\end{equation}

All boils down to proving (\ref{modular intermed}). In order to achieve this,
let $X$ be a r.v. with a Bernoulli(1/2) distribution and let $c_\mu\neq c_\lambda$ be two arbitrary positive constants. Consider the following r.v.'s

\[ Z_1:= c_\lambda X +  c_\mu X, \,\,\,\, Z_2:=c_\lambda X + c_\mu(1-X). \]
We have the following identities in distribution

\[Z_1\deq \frac{1}{2}[\delta_0 + \delta_{c_\lambda+c_\mu}] ,\;\;\; Z_2\deq \frac{1}{2} [ \delta_{c_\lambda} +  \delta_{c_\mu}], \]
with $\delta_x$ being the Dirac measure at $x$. Now it follows easily from the cut criterion in Proposition \ref{KarlinNovikoff} above that $Z_2\preceq_{cx} Z_1$. Hence, in particular, we can choose $\gamma(x)$ as a test function to obtain

\[ \mean \gamma(c_\lambda X + c_\mu(1-X) ) \leq \mean \gamma(c_\lambda X +  c_\mu X). \]
Because $X$ is a Bernoulli(1/2), the inequality above becomes
\[  \gamma(c_\lambda) + \gamma(c_\mu)\leq  \gamma(c_\lambda+c_\mu) +\gamma(0).   \]
Finally, taking the double mixture over $c_\lambda$ and $c_\mu$ according to the distributions of $C_u^\lambda$ and $C_u^\mu$ respectively, shows that (\ref{modular intermed}) is true, and this proves (\ref{positiveorder}).





\vspace{0.2cm}
Now, for inequality (\ref{negativeorder}) we have to prove that $\mean \varphi(D_0)\leq \mean \varphi(D_-)$, that is, keeping the same notation as in (\ref{association}),

\[ \sum_i\sum_j \pi_i\pi_j \varphi(i,j) \leq \sum_i\sum_j \pi_i\pi_{j} \varphi(i,K+1-i), \]
and  upon regrouping terms it becomes

\[ \sum_i\sum_{j:\,j<K+1-i} \pi_i\pi_j [\varphi(i,j) - \varphi(i,K+1-i)] \leq \sum_m\sum_{l:\,l>K+1-m} \pi_m\pi_l [\varphi(m,K+1-m) - \varphi(m,l)].    \]
This is the analogue of (\ref{tableaux1}). Again, it suffices to prove the term by term inequalities similar to (\ref{modular1}).
The symmetry axis in this case is the second diagonal of the tableaux. This means that the correspondence is $(i,j)\leftrightarrow (K+1-j,K+1-i)$, so
the analogue of (\ref{modular1}) that we prove is, for $i,j$ fixed, $j<K+1-i$

\begin{equation}\label{modular2} \varphi(i,j) - \varphi(i,K+1-i) \leq \varphi(K+1-j,j) - \varphi(K+1-j,K+1-i). \end{equation} 
In (\ref{modular2}) we dropped the coefficients $\pi_i\pi_j$ and $\pi_{K+1-i}\pi_{K+1-j}$ because these are equal since $M$ is assumed to have a symmetric distribution.
If we set $u=(K+1-i)-j=(K+1-j)-i$, from this point on the analysis is essentially the same. Consider the analogue of $\gamma$,
\[\eta(x):= \mean \varphi\left(x+C^\lambda_i-C_{K+1-i}^\mu\right),\]

then $(\ref{modular2})$ becomes
\[ \mean\eta(C_u^\mu) - \eta(0) \leq \mean\eta(C_u^\lambda + C_u^\mu) - \mean\eta(C_u^\lambda).  \]

This is precisely (\ref{modular intermed}) with $\gamma(x)$ replaced by $\eta(x)$, and since $\varphi$ was taken to be an arbitrary increasing convex function, the proof is complete.\end{proof}

\vspace{0.2cm}
\begin{remark} The requirement for $M$ to have a symmetric distribution may be too strong in general.
Some assumption on the distribution of $M$ is necessary but only for the ordering $D_0\preceq_{cx} D_-$.  For example, if we let $K=2$ and $M\deq\delta_1$ (Dirac mass in 1) then $D_0$ is the difference of two independent Erlang-1, whereas $D_-$ is an Erlang-1 minus an Erlang-2 so $D_-$ is $cx$-dominated in this case.

\vspace{0.2cm}
The above proof of the inequality between $D_+$ and $D_0$ does not require the finiteness of the support of $M$; $M$ discrete phase type is also a possible case in which the sums that appear in the proof become series. There are no convergence problems and we are allowed to change summation order as well, due to probabilistic interpretations. Of course there are restrictions if we look for negative correlation when $M$ has infinite support. More about this possibility can be found in Bladt and Nielsen~\cite{BoNegCor} on negatively correlated exponentials.
\end{remark}

\begin{Proposition} Let $W_-$, $W_0$, and $W_+$, be the steady-state waiting times, that correspond to the increments of the random walk distributed as $-D_-$, $-D_0$, and $-D_+$, respectively. Then we have convex ordering between the waiting times in the three scenarios
\[  W_+ \preceq_{cx} W_0\preceq_{cx} W_-.\]
\end{Proposition}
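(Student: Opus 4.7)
My plan is to carry the convex ordering of increments (from Theorem~\ref{order thm}) over to the waiting times through a Strassen coupling, exploiting the representation of $W$ as the supremum of the associated random walk. Convex order is preserved under negation (if $\varphi$ is convex, so is $x\mapsto\varphi(-x)$), so from Theorem~\ref{order thm} we obtain $-D_+ \preceq_{cx} -D_0 \preceq_{cx} -D_-$; these are precisely the i.i.d.\ increments $X_i$ of the three random walks $S_n = \sum_{i=1}^n X_i$, whose suprema coincide in distribution with the stationary waiting times $W_+$, $W_0$, $W_-$.

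The structural ingredient is that the truncated running maximum
\[ h_N(\vec x) := \max_{0\le n\le N}\,(x_1 + \cdots + x_n) \]
is \emph{jointly convex} on $\mathbb R^N$ (as a supremum of affine functionals) and non-decreasing in each coordinate. By Strassen's representation of convex order I build an i.i.d.\ sequence of coupled pairs $(U_i,U_i')_{i\ge 1}$ with $U_i\deq -D_+$, $U_i'\deq -D_0$, and $\mean(U_i'\mid U_i)=U_i$; independence across $i$ then yields the componentwise martingale identity $\mean(\vec U'\mid\vec U)=\vec U$ understood coordinatewise.

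For any increasing convex test function $\psi$, the composition $\psi\circ h_N$ is jointly convex, and the conditional Jensen inequality gives
\[ \mean\bigl[\psi\bigl(h_N(\vec U')\bigr)\,\bigl|\,\vec U\bigr] \;\ge\; \psi\bigl(h_N(\mean[\vec U'\mid\vec U])\bigr) \;=\; \psi\bigl(h_N(\vec U)\bigr). \]
Unconditioning yields $h_N(\vec U)\preceq_{icx} h_N(\vec U')$; letting $N\to\infty$ and invoking monotone convergence (together with integrability of $W$, inherited from the MVME assumption on $(A,B)$) passes the inequality to the limit and gives $W_+\preceq_{icx} W_0$. The argument for the pair $(-D_0,-D_-)$ is identical and produces $W_0\preceq_{icx} W_-$.

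The main obstacle is the upgrade from $\preceq_{icx}$ to the full convex order stated in the proposition. For nonnegative random variables $\preceq_{cx}$ is equivalent to $\preceq_{icx}$ combined with equality of expectations (Stoyan \& Daley, Thm.~1.3.1); although the mean increments coincide by Theorem~\ref{order thm}, the mean stationary waiting time in a $G/G/1$ queue generally depends on higher moments of the increment distribution, so $\mean W_+ = \mean W_0 = \mean W_-$ is not automatic. Closing the argument therefore requires a separate verification of these mean equalities, plausibly leveraging the fact that the three scenarios share the same marginal laws of $A$ and $B$; without it, the Strassen--Jensen reasoning above only delivers the conclusion in the $\preceq_{icx}$ sense.
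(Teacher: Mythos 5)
Your argument is correct, but it takes a genuinely different route from the paper. The paper's proof is essentially a one-line appeal to the external monotonicity theorem of Daley and Stoyan (\cite{Stoyan&Daley}, Thm.\ 5.2.1, p.\ 80): after noting, as you do, that $D_+\preceq_{cx}D_0\preceq_{cx}D_-$ is equivalent to the same ordering of $-D_+$, $-D_0$, $-D_-$, it simply invokes that theorem, which says that ordering of the i.i.d.\ increments of the random walk is inherited by the stationary waiting times. What you have done is reprove that external monotonicity result from scratch: Strassen's martingale coupling of the convexly ordered increments, joint convexity and coordinatewise monotonicity of the truncated running maximum $h_N$, conditional Jensen, and a monotone-convergence passage $N\to\infty$ using $W\deq\sup_{n\ge 0}S_n$. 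That derivation is sound (note that $\psi\circ h_N$ is convex precisely because $\psi$ is increasing as well as convex, which is why only the increasing convex conclusion comes out), and it buys a self-contained argument at the cost of more machinery than the paper's citation.

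Your closing caveat about upgrading $\preceq_{icx}$ to $\preceq_{cx}$ is not a gap in your reasoning but an imprecision in the statement itself: full convex order of the waiting times would force $\mean W_+=\mean W_0=\mean W_-$, which is false --- the paper's own Table 1 gives, e.g., $\mean W_+=0.86$, $\mean W_0=1.11$, $\mean W_-=1.36$ for $K=2$. Since the stationary waiting time is an increasing convex, but not linear, functional of the increments, only the increasing convex order transfers, and that is also all the cited Theorem 5.2.1 of \cite{Stoyan&Daley} delivers. So the proposition's ``convex ordering'' of the waiting times should be read as increasing convex ordering; your proof (like the paper's) establishes exactly that, and no separate verification of equal means is needed or indeed possible.
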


\begin{proof}
From the definition of convex ordering, $D_+\preceq_{cx} D_0$ is the same as $-D_+\preceq_{cx} - D_0$, and similarly $D_0\preceq_{cx} D_-$ is the same as $-D_0\preceq_{cx} - D_-$.
Therefore the external monotonicity result from Daley and Stoyan \cite{Stoyan&Daley} (Thm. 5.2.1, p.80) implies that the steady state workloads are convex ordered in the three scenarios, according to the increments of the random walk.
This can also be seen in the numerical tables and the plots below.\end{proof}

In Table \ref{tabletwo} below, we keep $\rho$ fixed, say $\rho=.5$, and we vary $K$.
In Table \ref{tableone} we vary the load coefficient $\rho$ and we keep the mixing distribution $M$ uniform on $\{1,...,5\}$ (i.e., $K=5$).  The tables contain the mean waiting times, their atoms at zero and $q$, the 95$\%$ quantile of the survival function/waiting time (i.e., $q$ is the value of the initial capital for which $\prob(W\leq q)=\phi_0(q)=.95$).
The plots of the tails of the ruin functions are in Figure \ref{figure 3} and Figure \ref{figure 2} below.

\begin{table}[!htbp]
\begin{center}
\begin{tabular}{c ccc ccc ccc}
\hline\hline
 $K$  &$\mean W_+$ &$\mean W_0$ &$\mean W_-$  &$\prob(W_+=0)$ &$\prob(W_0=0)$ &$\prob(W_-=0)$ & $q_+$ & $q_0$ & $q_-$ \\ \hline
 2  &0.86 &1.11 &1.36  &0.57 &0.54 &0.51 & 4.36 & 5.31 & 6.25 \\
 4 &0.68 &1.37 &2.11  &0.67  &0.58 &0.52 & 3.93 & 6.78 & 9.48 \\
 7 &0.51  &1.78 &3.22  &0.75 &0.61 &0.53 & 3.39 & 9.09 & 14.35 \\
 14 &0.31 &2.79 &5.82  &0.85 &0.64 &0.540 & 2.33 & 14.58 & 25.74 \\
\hline\hline
\end{tabular}
\end{center}
\caption{Mean waiting times, atoms at 0 and 95$\%$ percentiles for $\rho=.5$ and various values of $K$}
\label{tabletwo}
\end{table}

\begin{figure}[!htbp]
\subfigure[$\rho=.5$, $K=2$]{
\includegraphics[scale=.40]{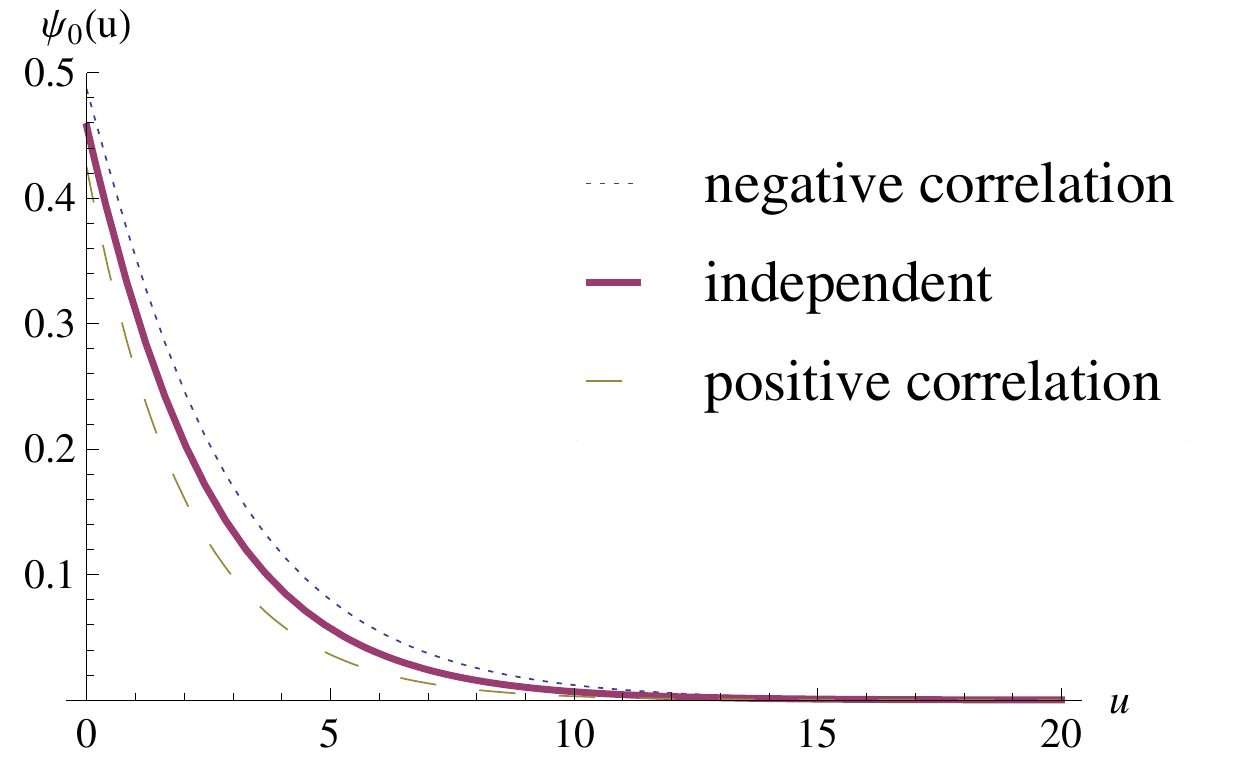}
}
\subfigure[$\rho=.5$, $K=4$]{
\includegraphics[scale=.55]{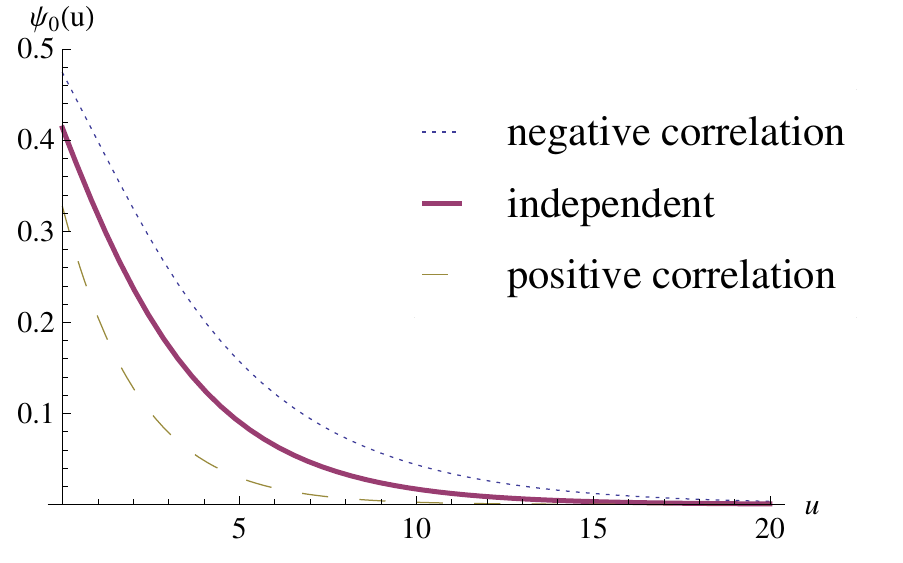}
}
\subfigure[$\rho=.5$, $K=7$]{
\includegraphics[scale=.55]{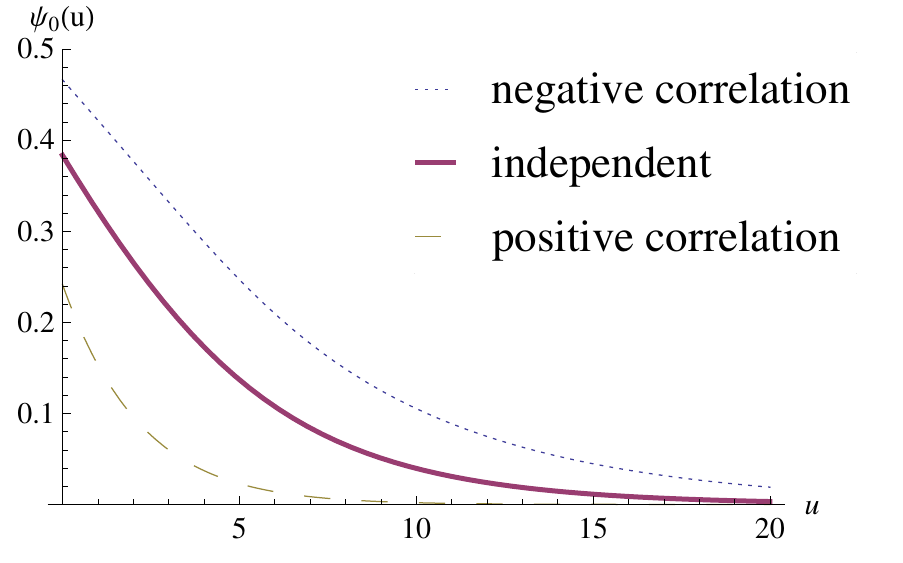}
}
\subfigure[$\rho=.5$, $K=14$]{
\includegraphics[scale=.55]{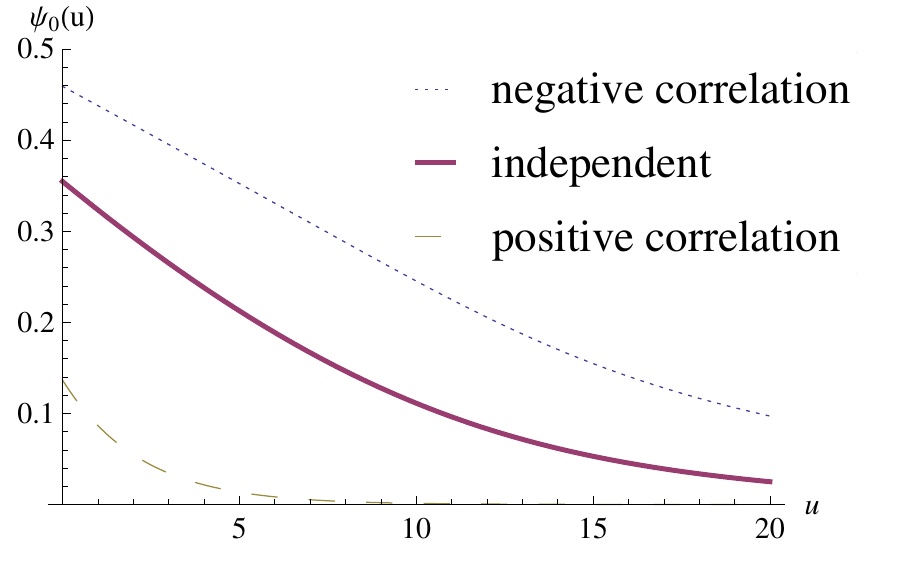}
}
\caption{$\prob(W>u)=\Psi_0(u)$.}
\label{figure 3}
\end{figure}

\begin{table}[!htbp]
\begin{center}
\begin{tabular}{c ccc ccc ccc}
\hline\hline
 $\rho$  &$\mean W_+$ &$\mean W_0$ &$\mean W_-$  &$\prob(W_+=0)$ &$\prob(W_0=0)$ &$\prob(W_-=0)$ & $q_+$ & $q_0$ & $q_-$ \\ \hline
.05 &0.01  &0.07 &0.15 &0.988  &0.96  &0.95  & 0 & 0 & 0 \\
.25 &0.12 &0.47 &0.88  &0.90  &0.82 &0.76 & 0.85 & 3.54 & 5.72\\
.5 &0.62  &1.50 &2.48 &0.70 &0.59 &0.52 & 3.74 & 7.54 & 11.1 \\
.75 &2.48 &4.77 &7.15  &0.39 &0.32 &0.27 & 10.14 & 17.81 & 25.5  \\
.95 &18.4 &31.4 &44.48 &0.08  &0.066   &0.056 & 58.26 & 97.89 & 137.58 \\

\hline\hline
\end{tabular}
\end{center}
\caption{Mean waiting times, atoms at 0 and 95$\%$ percentiles for $K=5$ and various values of $\rho$ }
\label{tableone}
\end{table}

\begin{figure}[!htbp]
\subfigure[$\rho=.05$, $K=5$]{
\includegraphics[scale=.55]{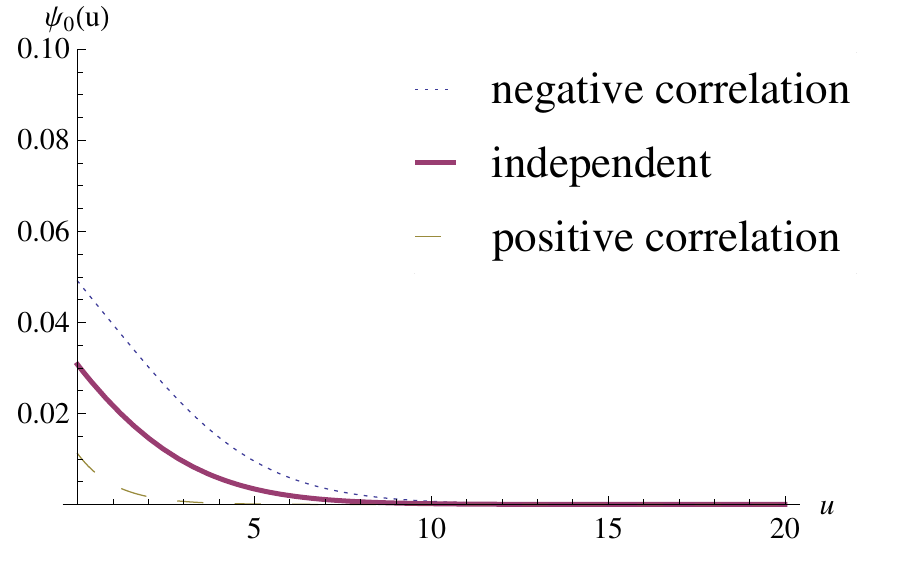}
}
\subfigure[$\rho=.25$, $K=5$]{
\includegraphics[scale=.55]{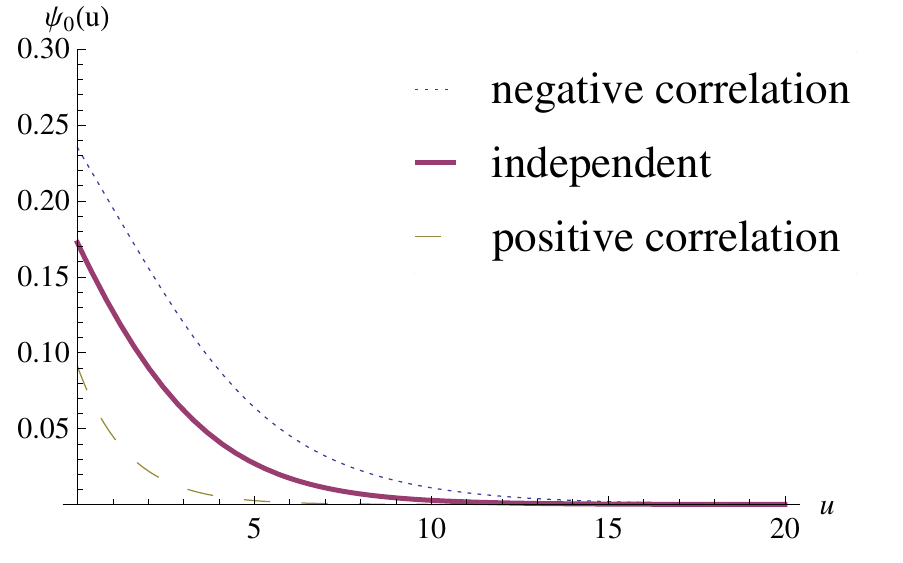}
}
\subfigure[$\rho=.5$, $K=5$]{
\includegraphics[scale=.55]{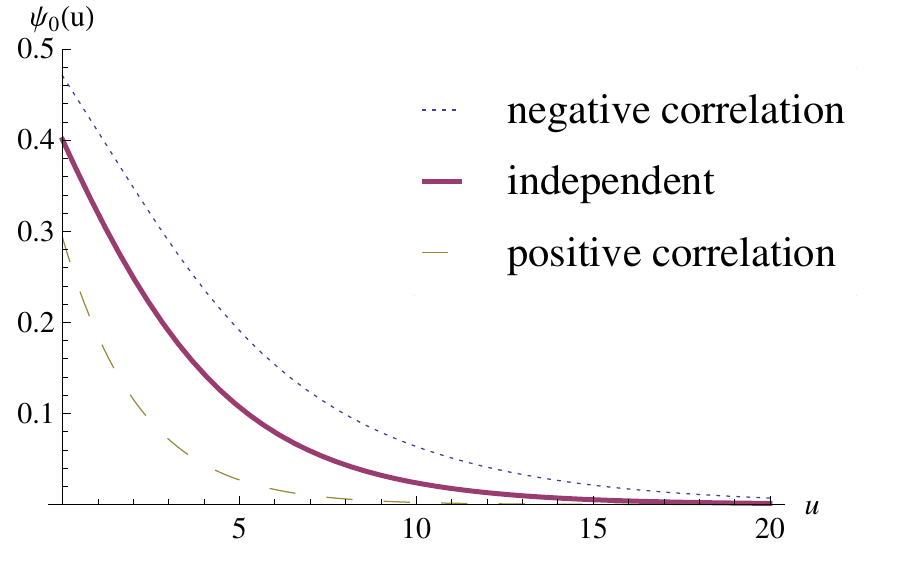}
}
\subfigure[{$\rho=.75$, $K=5$}]{
\includegraphics[scale=.55]{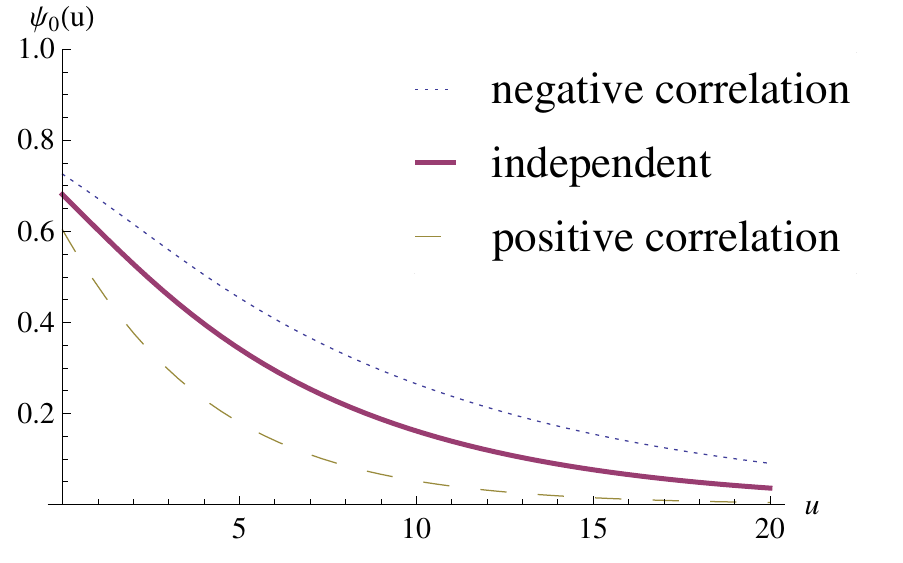}
}
\subfigure[{$\rho=.95$, $K=5$}]{
\includegraphics[scale=.40]{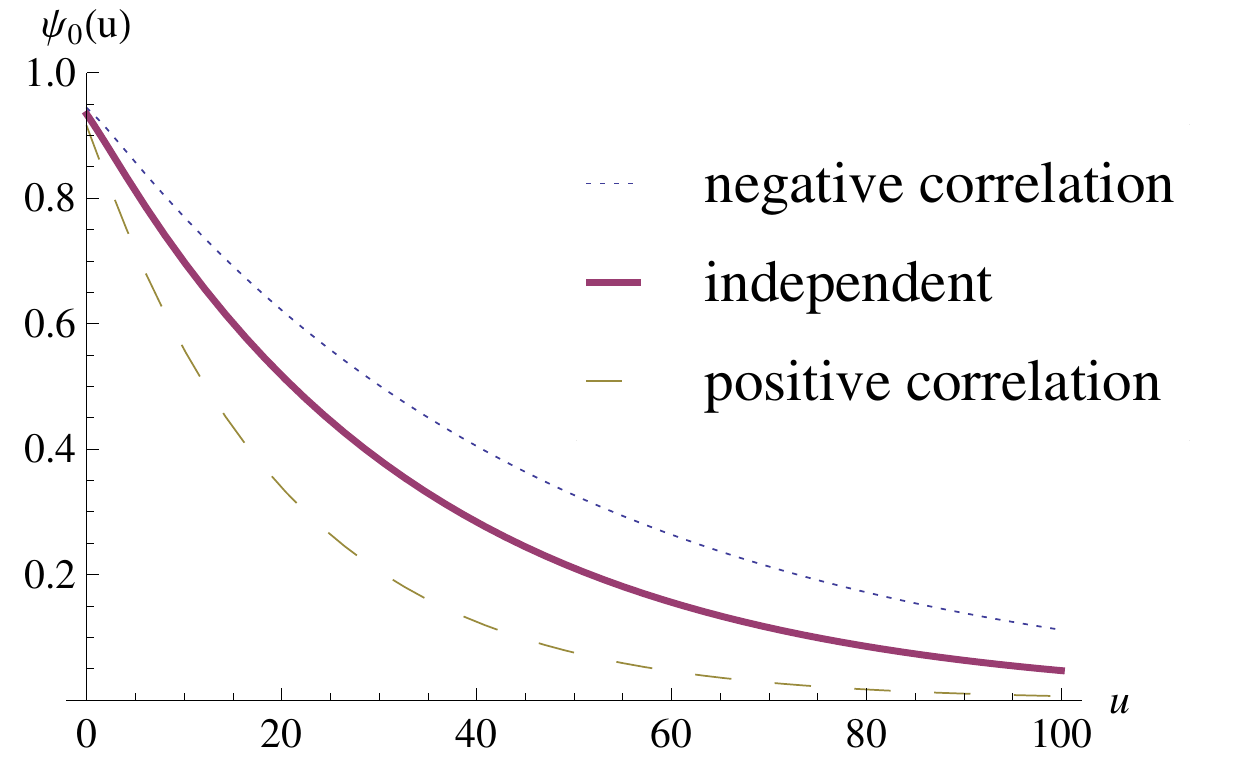}
}
\caption{$\prob(W>u)=\Psi_0(u)$.}
\label{figure 2}
\end{figure}

\end{section}

 \renewcommand{\thesection}{A}
  \setcounter{section}{1}
  \section*{APPENDIX}
\begin{ATheorem}[$Rouch\acute{e}$,~\cite{Titchmarsh}, p.116]\label{A1}If two functions $g(s)$ and $f(s)$ are analytic inside and on a closed contour $C$, and $|g(s)|>|f(s)|$ on $C$, then $g(s)$ and $g(s)-f(s)$ have the same number of zeros inside $C$. \end{ATheorem}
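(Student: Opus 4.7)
The plan is to deduce this from the argument principle, via a standard homotopy between $g$ and $g-f$. First I would recall that for a function $h$ analytic on and inside a closed contour $C$ and non-vanishing on $C$, the number of zeros of $h$ inside $C$ (counted with multiplicity) is given by
$$N(h) \;=\; \frac{1}{2\pi i}\oint_C \frac{h'(s)}{h(s)}\,ds.$$
The strategy is to interpolate between $g$ and $g-f$ by setting $g_t(s) := g(s) - t f(s)$ for $t\in[0,1]$, so that $g_0 = g$ and $g_1 = g-f$.

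Next I would verify that $g_t$ is non-vanishing on $C$ uniformly in $t$. The hypothesis $|g(s)|>|f(s)|$ on $C$ gives, for every $s\in C$ and $t\in[0,1]$,
$$|g_t(s)| \;\geq\; |g(s)| - t|f(s)| \;\geq\; |g(s)| - |f(s)| \;>\; 0,$$
with a strictly positive uniform lower bound because $C$ is compact and the left side of the last inequality is a continuous positive function on $C$. This legitimises applying the argument principle to each $g_t$.

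Then I would define
$$N(t) \;:=\; \frac{1}{2\pi i}\oint_C \frac{g_t'(s)}{g_t(s)}\,ds,$$
which counts the zeros of $g_t$ inside $C$ and hence takes values in $\mathbb{Z}$. Using the uniform lower bound just established and the fact that the integrand is jointly continuous in $(s,t)$ on $C\times[0,1]$, $N(t)$ is continuous in $t$. A continuous integer-valued function on $[0,1]$ is constant, so $N(0)=N(1)$, which is exactly $N(g)=N(g-f)$.

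The only genuinely delicate point is the continuity of $N(t)$, which reduces to a routine dominated-convergence or uniform-continuity estimate using the uniform positive lower bound on $|g_t|$ along $C$. As an alternative route, one can avoid the homotopy altogether by writing $g-f = g\bigl(1 - f/g\bigr)$ on $C$: since $|f/g|<1$ there, the image of $C$ under $1-f/g$ lies in the open disk of radius $1$ centred at $1$, whose winding number about the origin is $0$, and
$$N(g-f) \;=\; N(g) + \frac{1}{2\pi i}\oint_C \frac{(1-f/g)'(s)}{(1-f/g)(s)}\,ds \;=\; N(g).$$
Either approach is short; the homotopy version is the one I would write up because it requires only the argument principle and a compactness argument.
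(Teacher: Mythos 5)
Your proof is correct. A remark on context first: the paper does not prove this statement at all --- it is quoted verbatim as a classical theorem from Titchmarsh and used as a black box (in Lemma \ref{A3}), so there is no internal proof to compare against; what you have written is essentially the standard textbook proof. Your homotopy argument is sound: $g_t=g-tf$ is analytic inside and on $C$, the bound $|g_t|\geq |g|-|f|>0$ on $C$ (uniform by compactness) legitimises the argument principle for every $t$, and the zero-count $N(t)=\frac{1}{2\pi i}\oint_C g_t'(s)/g_t(s)\,{\rm d}s$ is a continuous integer-valued function of $t$ on $[0,1]$, hence constant, giving $N(g)=N(g-f)$. The only points worth making explicit in a write-up are (i) that $C$ is understood to be a simple, positively oriented closed contour so that the argument principle applies in the stated form, and (ii) in your alternative route, that $1-f/g$ is analytic only in a neighbourhood of $C$ (where $g\neq 0$), so the identity
\begin{equation*}
\frac{(g-f)'}{g-f}=\frac{g'}{g}+\frac{\bigl(1-f/g\bigr)'}{1-f/g}
\end{equation*}
is to be used on $C$ itself, the last integral being the winding number of the image curve $\bigl(1-f/g\bigr)(C)$ about the origin, which is $0$ because that curve stays in the disk $|w-1|<1$; phrased this way, as you did, it is fine. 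Either version is a complete and correct proof of the cited theorem.
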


\begin{ATheorem}[$Liouville$,~\cite{Titchmarsh}, p.85]\label{A2} If $f(s)$ is analytic for all finite values of $s$, and as $|s|\rightarrow \infty$,

 \[f(s)=O(|s|^m),\] then $f(s)$ is a polynomial of order $\leq m$.
\end{ATheorem}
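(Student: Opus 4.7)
The plan is to prove this generalization of Liouville's theorem via Cauchy's estimates on the Taylor coefficients of $f$ at the origin: polynomial growth of $f$ forces all coefficients of index strictly greater than $m$ to vanish, so the Taylor series collapses into a polynomial.

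Concretely, I would fix an integer $k$ and start from the Cauchy integral representation of the $k$-th derivative on a circle of radius $R$ centred at $0$, which yields the standard bound
\[ |f^{(k)}(0)| \leq \frac{k!}{R^{k}}\sup_{|s|=R}|f(s)|. \]
By the growth hypothesis there exist constants $C,R_0>0$ such that $|f(s)|\leq C|s|^{m}$ whenever $|s|\geq R_0$, so for every $R\geq R_0$ one obtains
\[ |f^{(k)}(0)| \leq k!\,C\,R^{m-k}. \]
Whenever $k>m$ the right-hand side tends to $0$ as $R\to\infty$, forcing $f^{(k)}(0)=0$.

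To conclude, I would use that $f$ is entire, so its Taylor series
\[ f(s)=\sum_{k=0}^{\infty}\frac{f^{(k)}(0)}{k!}\,s^{k} \]
converges to $f$ on all of $\mathbb{C}$. By the previous step only the coefficients with index $k\leq \lfloor m\rfloor$ can be nonzero, hence the series truncates and $f$ is a polynomial of degree at most $m$. There is no serious obstacle here; the only point to watch is that the $O(|s|^{m})$ hypothesis is available only for large $|s|$, so the Cauchy bound must be applied on circles of sufficiently large radius, which is exactly the regime used when sending $R\to\infty$.
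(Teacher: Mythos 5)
Your argument is correct and is the classical proof of this result: the paper itself gives no proof but simply cites Titchmarsh, where the theorem is established by exactly this Cauchy-estimate argument on the Taylor coefficients (bounding $|f^{(k)}(0)|$ by $k!\,C\,R^{m-k}$ and letting $R\to\infty$ to kill all coefficients of index $k>m$). Nothing is missing; your remark about applying the bound only on sufficiently large circles is the right precaution.
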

We can now formulate and prove the following lemma.
\begin{ALemma}\label{A3} Let $f(s)$ and $g(s)$ be the numerator and the denominator of $\mathbb E e^{-s(c^{-1}B-A)}$. Then $g(s)-f(s)$ and $g(s)$  have the same number of zeros in $\mathcal Re\;s\geq 0$. 
\end{ALemma}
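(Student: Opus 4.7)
The plan is to apply Rouch\'e's theorem (Theorem \ref{A1}) to $g$ and $g-f$ on a contour $C_{R,\epsilon}$ that, in the limit $R\to\infty,\,\epsilon\to 0$, bounds the closed right half-plane. Concretely, I take $C_{R,\epsilon}$ to consist of the large semicircle $\{|s|=R,\,\mathcal Re\,s\geq 0\}$, the imaginary-axis segments $[-iR,-i\epsilon]\cup[i\epsilon,iR]$, and a small semicircular detour of radius $\epsilon$ around $s=0$; the detour is necessary because $f(0)=g(0)=1$ defeats the strict Rouch\'e inequality at the origin.

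The main obstacle --- and the reason the whole proof structure is delicate --- is choosing the correct side for this detour. A second-order Taylor expansion yields
\begin{equation*}
|\mathbb E e^{-sY}|^2 \;=\; 1 - 2\,\mathcal Re(s)\,\mathbb E Y + O(|s|^2)
\end{equation*}
for $s$ near $0$, so by the stability condition $\mathbb E Y<0$ the linear correction has the opposite sign of $\mathcal Re(s)$. Accordingly, the detour \emph{must} bulge into the \emph{left} half-plane, where $\mathcal Re(s)<0$ delivers $|f/g|<1$; a right-bulging detour would flip the sign and invalidate Rouch\'e.

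It remains to verify $|g|>|f|$ on the other two pieces. On the large semicircle, since the bivariate matrix-exponential joint distribution endows $Y$ with a density, Riemann-Lebesgue provides $\mathbb E e^{-i\omega Y}\to 0$ as $|\omega|\to\infty$, hence $\deg f<\deg g$ and $|f/g|\to 0$ at infinity in every direction. On the imaginary-axis segments with $\omega\neq 0$, the triangle inequality gives $|f(i\omega)/g(i\omega)|=|\mathbb E e^{-i\omega Y}|\leq 1$, with strict inequality because a distribution with a density cannot concentrate on the arithmetic lattice $\{c+2\pi k/\omega:k\in\mathbb Z\}$ needed to saturate it. Rouch\'e's theorem then yields equal zero counts for $g$ and $g-f$ inside $C_{R,\epsilon}$; letting $R\to\infty$ and $\epsilon\to 0$ gives the asserted equality in $\mathcal Re\,s\geq 0$, where on the $(g-f)$-side the count includes the zero at $s=0$ generated by the normalisation $f(0)=g(0)=1$.
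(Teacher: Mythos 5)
Your overall architecture is the same as the paper's: a Rouch\'e comparison of $g$ and $g-f$ over (essentially) the boundary of the right half-plane, with the key point that the stability condition $\mathbb E Y<0$ forces the indentation near the origin to go into the \emph{left} half-plane, so that $|f/g|<1$ there and the zero of $g-f$ at $s=0$ is counted. The paper implements the indentation slightly differently: rather than a small detour around $0$ only, it shifts the whole vertical edge of the contour to $\mathcal Re\,s=-\epsilon$ and bounds $|f(-\epsilon+i\omega)/g(-\epsilon+i\omega)|\le \mathbb E e^{\epsilon Y}=f(-\epsilon)/g(-\epsilon)<1$, again via stability. That choice is not cosmetic: it makes any discussion of the transform on the punctured imaginary axis unnecessary, which is exactly where your write-up has its weak point.

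The genuine gap is your repeated appeal to ``$Y$ has a density''. The MVME hypothesis only says that the joint LST of $(A,B)$ is rational; it does not give absolute continuity of $Y=c^{-1}B-A$. For instance, positive mass on the diagonal $\{B=cA\}$ (hence an atom of $Y$ at $0$) is compatible with a rational joint transform, and the paper explicitly allows $\mathbb P(c^{-1}B=A)>0$, excluding only the degenerate case $\mathbb P(c^{-1}B=A)=1$. Consequently neither ``$\deg f<\deg g$'' nor ``Riemann--Lebesgue gives $|f/g|\to 0$'' holds in general, and your strict inequality $|f(i\omega)/g(i\omega)|<1$ for $\omega\neq 0$ is not justified as stated. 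Both steps are repairable without a density: on the large arc it suffices that the rational function $f/g$ has a limit at infinity, necessarily equal to $\lim_{|\omega|\to\infty}|\phi_Y(\omega)|$-type limits along the axis, namely $\mathbb P(Y=0)<1$; and if $|\mathbb E e^{-i\omega_0 Y}|=1$ for some $\omega_0\neq 0$, then $Y$ is lattice, whose characteristic function has periodic modulus, while a rational characteristic function has a limit at infinity -- together this forces $|\phi_Y|\equiv 1$, i.e.\ $Y$ degenerate, and rationality then forces $Y\equiv 0$, the excluded trivial case. Alternatively, adopting the paper's shifted segment at $\mathcal Re\,s=-\epsilon$ removes the need for the on-axis strict inequality altogether.
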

\begin{proof} Via Rouch$\acute{e}$'s theorem, we first prove that $|g(s)|>|f(s)|$ on a suitably chosen contour in the complex plane. The fact that $f(0)=g(0)$ and that the transform is rational (so it is also analytic on a strip in $\mathcal Re$ $s<0$) suggests that we consider the following contour made up from the extended semi-circle \[\mathcal C_\epsilon:=\left\{R(\cos\varphi+i\sin\varphi);\;\varphi\in \left[-\pi/2-\arccos \epsilon,\pi/2+\arccos\epsilon\right] \right\},\] together with the vertical line segment $S:=\left\{-\epsilon+i\omega;\;|\omega|\in\left[0,R\sqrt{1-\epsilon^2} \right]\right\}.$

We show that $|g(s)|>|f(s)|$ on this contour, for $\epsilon$ sufficiently small.\newline First on $\mathcal C_\epsilon$: $\left|\frac{f(Re^{i\varphi})}{g(Re^{i\varphi})}\right|\leq \mathbb E e^{- R\cos\varphi (c^{-1}B-A) }\rightarrow \mathbb P(c^{-1}B-A=0)$ as $R\rightarrow \infty$. 
 We can assume $\mathbb P(A=c^{-1}B)<1$, else there is nothing to prove. This means 
 $|\frac{f(Re^{i\varphi})}{g(Re^{i\varphi})}|<1$ for $R$ sufficiently large.

 In order to prove the inequality on the line segment $S$, we use the stability condition: $\mathbb E(A-c^{-1}B)=\frac{d}{ds}\frac{f(s)}{g(s)}|_{s=0}>0$. So for $\epsilon$ sufficiently small, $\frac{f(-\epsilon)}{g(-\epsilon)}<\frac{f(0)}{g(0)}=1$. Then on $S$ we have: \[\left|\frac{f(-\epsilon+i\omega)}{g(-\epsilon+i\omega)}\right|=|\mathbb E e^{-(-\epsilon+i\omega)(c^{-1} B-A)}|\leq\mathbb Ee^{\epsilon(c^{-1}B-A)}|e^{-i\omega(c^{-1}B-A)}|=\frac{f(-\epsilon)}{g(-\epsilon)}<1. \]

Hence $|f(s)|<|g(s)|$ on the whole contour. These being polynomials, Rouch$\acute{e}$'s theorem \ref{A1} ensures that $g$ and $g-f$ have the same number of zeros inside $\mathcal C_\epsilon$, and since $\epsilon$ was arbitrarily small, this also holds on $\displaystyle\cap_{\epsilon>0}\mathcal C_\epsilon^\circ=\left\{s;\;\mathcal Re\;s\geq 0\right\}\cap\left\{s;\;|s|\leq R\right\}$, where $\mathcal C_\epsilon^\circ$ is the interior of $C_\epsilon$. Finally, letting $R\rightarrow\infty$, proves the assertion. \end{proof}

\begin{proof}[\textbf{Proof of Lemma \ref{LS Lemma}}]

  a) We can write the joint Laplace-Stieltjes transform by conditioning on $M$:

 \[  \mathbb E e^{-s_1A-s_2B}= \sum_{n=1} ^\infty \mathbb P(M=n) \left(\frac{\lambda}{\lambda+s_1}\right)^n \left(\frac{\mu}{\mu+s_2}\right)^n.  \]
 If we set $z=\frac{\lambda}{\lambda +s_1}\frac{\mu}{\mu+s_2}$, we can recognize the probability generating function of $M$ at $z$, call it $P_M(z)$.

$M$ has a discrete phase-type distribution with representation $(\boldsymbol\alpha, T)$ (Neuts~\cite{Neuts_mat_analyt}), such that $I-T$ is non-singular
(here $I$ is the identity matrix), and the probability vector $\boldsymbol\alpha$ is supported on the transient states. Thus \[\mathbb P(M=n)=\boldsymbol\alpha' T^{n-1}\boldsymbol t \] for $n\geq 1$, $\boldsymbol t=(I-T)\boldsymbol 1$, $P(M=0)=0$.
  If we now focus on this generating function, we have the following (Asmussen \cite{Asmussen_queues}~Prop. 4.1, p.83):

 \[P_M(z)= \boldsymbol\alpha'(\frac{1}{z}I-T)^{-1}\boldsymbol t,  \] 
 and we have proved part $a).$

 b) To see why $\mean e^{-sY}=P_M(\frac{\lambda}{\lambda-s}\frac{\mu}{\mu+s})$ is a rational function, rewrite the inverse : \[(\frac{1}{z}I-T)^{-1}=\frac{ 1}{\det(\frac{1}{z}I-T)}(\frac{1}{z}I-T)^*.\]
 Remark that the denominator $\det(\frac{1}{z}I-T)$ is a polynomial of order $|\mathcal S|-1$ (the number of transient states) in $\frac{1}{z}$, because $z^{-1}$ appears only on the diagonal of the matrix $(\frac{1}{z}I-T)$.
  $(\frac{1}{z}I-T)^*$ is the algebraic complement of $(\frac{1}{z}I-T)$ (also known as matrix of cofactors). Its entries are of the form $(-1)^{i+j}\det(M_{ij})$, where $M_{ij}$ is the matrix obtained by deleting row $i$ and column $j$ of $(\frac{1}{z}I-T)$. These are polynomials in $z^{-1}$ of order $<|\mathcal S|-1$ (because of the deleted rows and columns in the entries, the degree of the determinants of these sub-blocks as polynomials in $z^{-1}$ is always smaller than the dimension of the matrix $T$) and hence so is the bilinear form $\boldsymbol\alpha' (\frac{1}{z}I-T)^*\boldsymbol t $, which is the numerator of $P_M(z)$.
\end{proof}

\begin{ALemma}\label{differentiation} $\phi(w)$ in (\ref{primitive}) is differentiable.

\end{ALemma}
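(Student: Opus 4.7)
The plan is to apply Leibniz's rule (differentiation under the integral sign) twice: once for the inner $x$-integral, in which $w$ appears only in the limits of integration, and then once more to swap $d/dw$ with the outer $v$-integration.

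First I would introduce the abbreviation $\Phi(x,v) := \int_{z=0}^{x} f_{A,B}(v,z)\,\phi_0(x-z)\,dz$, so that (\ref{primitive}) reads
\[
\phi(w) \;=\; \frac{\alpha}{c}\int_{0}^{\infty} G(w,v)\,dv, \qquad G(w,v) \;:=\; \int_{w}^{w+cv} \Phi(x,v)\,dx .
\]
For each fixed $v$, the map $x \mapsto \Phi(x,v)$ is continuous: it is the convolution of the bounded function $\phi_0$ with $f_{A,B}(v,\cdot)\in L^1(\mathbb{R}_+)$, and $\phi_0$ itself is continuous because $B$ has a density. The fundamental theorem of calculus then gives $\partial_{w} G(w,v) = \Phi(w+cv,v)-\Phi(w,v)$.

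The key estimate that lets me pass the derivative through $\int_{0}^{\infty}(\cdot)\,dv$ is
\[
|\Phi(x,v)| \;\leq\; \int_{0}^{\infty} f_{A,B}(v,z)\,dz \;=\; f_A(v),
\]
which uses only $\phi_0\le 1$. Hence $|\partial_{w} G(w,v)| \leq 2\,f_A(v)$, a bound that is integrable in $v$ (with total mass $2$) and independent of $w$. Dominated convergence applied to the difference quotient $(\phi(w+h)-\phi(w))/h$ then establishes differentiability of $\phi$ and identifies
\[
\phi'(w) \;=\; \frac{\alpha}{c}\int_{0}^{\infty} \bigl[\Phi(w+cv,v)-\Phi(w,v)\bigr]\,dv,
\]
which is exactly the expression invoked in the proof of Theorem~\ref{delay relation ruin}.

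The only mildly delicate step is the continuity of $\Phi(\cdot,v)$, used in the first paragraph. The main obstacle, if one were reluctant to invoke continuity of $\phi_0$, can be bypassed by a softer argument: the bound $|\Phi(\cdot,v)|\le f_A(v)$ already shows that $w\mapsto G(w,v)$ is Lipschitz with constant $2f_A(v)$, so after integrating in $v$ one obtains that $\phi$ itself is Lipschitz (hence absolutely continuous), and the formula for $\phi'(w)$ holds at every Lebesgue point of $\Phi(\cdot,v)$ — which is more than enough for the renewal-equation manipulation that follows the invocation of this lemma.
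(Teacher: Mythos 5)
Your argument is correct and follows essentially the same route as the paper: bound the difference quotient of the inner ($v$-indexed) integral by $2f_A(v)$ using only $\phi_0\le 1$, then pass the limit through the $v$-integral by dominated convergence. The only difference is cosmetic: you make explicit (via continuity of the convolution $\Phi(\cdot,v)$ and the fundamental theorem of calculus) the pointwise differentiability of the inner integral, a step the paper's proof uses implicitly when writing $\lim_{\epsilon\to 0}D_\epsilon(v)=\frac{\partial}{\partial w}h_w(v)$.
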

\begin{proof}
Let $h_w(v):= \int\limits_{x=w}^{w+cv}\int\limits_{z=0}^{x}f_{A,B}(v,z)\, \phi_0(x-z)\,{\rm d}z\,{\rm d}x.$ Using the triangle inequality, we have the following upper bound
\[ |h_{w+\epsilon}(v)-h_{w}(v)| \leq \int\limits_{x=w}^{w+\epsilon}\int\limits_{z=0}^x f_{A,B}(v,z) \phi_0(x-z)\,{\rm d}z\,{\rm d}x + \int\limits_{x=w+cv}^{w+cv+\epsilon}\int\limits_{z=0}^x f_{A,B}(v,z)\phi_0(x-z)\,{\rm d}z\,{\rm d}x. \]
Let us denote by $I$ and $II$ the first and the second term that appear above, respectively. If we use the fact that $\phi_0(x)\leq 1$, we find the upper bounds on $I$ and $II$:
\[I\leq \int\limits_{x=w}^{w+\epsilon}\int\limits_{z=0}^{w+\epsilon} f_{A,B}(v,z)\,{\rm d}z\,{\rm d}x = \epsilon\int\limits_{z=0}^{w+\epsilon} f_{A,B}(v,z)\,{\rm d}z,\]
and similarly,
\[ II\leq \epsilon\int\limits_{z=0}^{w+cv+\epsilon} f_{A,B}(v,z)\,{\rm d}z .\]
So if we denote $D_\epsilon(v):=\frac{h_{w+\epsilon}(v)-h_w(v)}{\epsilon}$,
\[|D_\epsilon(v)| \leq \int\limits_{z=0}^{w+\epsilon} f_{A,B}(v,z)\,{\rm d}z + \int\limits_{z=0}^{w+cv+\epsilon} f_{A,B}(v,z)\,{\rm d}z \leq 2 f_A(v), \]
and clearly the upper bound is integrable as a function of $v$. By virtue of dominated convergence
\[\phi'(w)=\lim_{\epsilon\rightarrow 0}\int\limits_{v=0}^\infty D_\epsilon(v)\,{\rm d}v = \int\limits_{v=0}^\infty \lim_{\epsilon \rightarrow 0} D_\epsilon(v)\,{\rm d}v=\int\limits_{v=0}^\infty \frac{\partial}{\partial w}h_w(v)\,{\rm d}v.   \]
\end{proof}

\begin{ALemma}\label{lemma residual density}
Under the conditions from Remark \ref{reciprocal remark}, the density of the pair $(A^{res},B^*)$ is
\[f_{(A^{res},B^*)}(r,z)= \alpha \int\limits_{v=r}^\infty f_{(A,B)}(v,z)\,{\rm d}v.\]
\end{ALemma}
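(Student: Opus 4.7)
The plan is to derive the formula from classical Palm/length-bias arguments applied to the i.i.d.\ bivariate marked sequence $\{(A_n,B_n)\}$. Because the arrival process is renewal and is put into stationarity, time $0$ is an arbitrary epoch that falls inside the (random) inter-arrival interval $[t_{n-1},t_n)$ for some $n$; call its length $A^*$ and let $B^*$ be the claim size attached to that same index $n$. The residual inter-arrival time is then $A^{res}=t_n$, the distance from $0$ to the next arrival.

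My first step is to compute the joint density of the "size-biased" pair $(A^*,B^*)$. For the marginal $A$ this is the inspection paradox: the interval covering a randomly chosen time point has length-biased density $v f_A(v)/\mean A$. The same argument applied to the bivariate mark $(A,B)$ (select each interval with weight proportional to its length, while carrying its mark along) gives
\[
f_{A^*,B^*}(v,z)=\frac{v}{\mean A}f_{A,B}(v,z)=\alpha v\,f_{A,B}(v,z).
\]
This is rigorously the Palm inversion formula applied to the stationary point process with marks $(A_n,B_n)$; independence of successive marks is not needed here, only that the marking is stationary.

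My second step is to condition on $(A^*,B^*)=(v,z)$. Given that the interval containing the origin has length $v$, stationarity of the underlying point process on $\mathbb R$ forces the position of $0$ to be uniform on $[0,v]$, so $A^{res}\mid A^{*}=v$ is uniform on $[0,v]$ and this conditional law does not involve $z$ (the mark $B^*$ only influences the selection of the interval, not where within it $0$ sits). Hence
\[
f_{A^{res},A^*,B^*}(r,v,z)=\alpha v\,f_{A,B}(v,z)\cdot\frac{1}{v}\mathbf 1_{\{0\le r\le v\}}=\alpha\,f_{A,B}(v,z)\mathbf 1_{\{0\le r\le v\}}.
\]

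My third and final step is to marginalize out $A^*$: integrating the joint density over $v\in[r,\infty)$ yields
\[
f_{A^{res},B^*}(r,z)=\alpha\int_{v=r}^{\infty}f_{A,B}(v,z)\,\mathrm d v,
\]
which is exactly \eqref{reciprocal density}. The only delicate point is the justification of the length-biased formula for $(A^*,B^*)$ in the correlated case; this is standard but worth spelling out, and it rests on writing the long-run time-average of $\mathbf 1_{\{A^*\le v,B^*\le z\}}$ over a large interval $[0,T]$ as $\sum_{n\le N(T)} A_n\mathbf 1_{\{A_n\le v,B_n\le z\}}/T$, then applying the strong law of large numbers to both numerator and $N(T)/T\to\alpha$.
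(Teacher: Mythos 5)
Your proof is correct and follows essentially the same route as the paper: both pass through the length-biased pair with density $\alpha v f_{A,B}(v,z)$, use that the residual time is uniform on the covering interval given its length (and independent of the mark), and integrate out the interval length. The only difference is cosmetic -- you additionally sketch a Palm/SLLN justification of the length-bias step, which the paper simply takes as the definition of the augmented pair $(\tilde A,B^*)$ together with the identity $(A^{res},B^*)\deq((1-U)\tilde A,B^*)$.
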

\begin{proof} 

Consider the augmented pair $(\tilde A,B^*)$ which by definition has density
\[f_{(\tilde A,B^*)}(v,z):= \alpha v f_{A,B}(v,z),\]
 where $\alpha$ acts as the normalizing factor: $\frac{1}{\alpha} =\mean A = \int_z \int_v v f_{A,B}(v,z)\,{\rm d}v\,{\rm d}z.$ Let $U$ be a standard uniform r.v., independent of both $A$ and $B$. Then  $(A^{res},B^*)\deq((1-U)\tilde A,B^*)$, therefore  
conditional on $\tilde A$, $A^{res}$ is uniformly distributed over the interval $[0, \tilde A]$, so we may write in terms of density functions

\[  f_{(A^{res},B^*)}(v,z) = \int\limits_{r=v}^{\infty} \frac{1}{r}f_{(\tilde A,B^*)}(r,z) \,{\rm d}r = \alpha \int\limits_{r=v}^{\infty} f_{(A,B)}(r,z) \,{\rm d}r. \]

\end{proof}
\vspace{0.3cm}

\noindent
{\bf Acknowledgment}. The authors are indebted to Hansjoerg Albrecher, S{\o}ren Asmussen, Zinoviy Landsman, and Tomasz Rolski
for valuable discussions and useful references.

\bibliographystyle{plain}

\end{document}